\numberwithin{equation}{section}
\theoremstyle{plain}
\newtheorem{thm}{Theorem}
\newtheorem{cor}{Corollary}
\def \beqn {\begin{eqnarray}}
\def \beqnn {\begin{eqnarray*}}
\def \eeqn {\end{eqnarray}}
\def \eeqnn {\end{eqnarray*}}
\def \GG {\mathcal{G}}
\def \FF {\mathcal{F}}
\def \FF {\mathcal{F}}
\def \no {\Arrowvert}
\def \ind {\hbox{ 1\hskip -3pt I}}
\def \R {\mathbb{R}}
\def \P  {\mathbb{P}} 
\def \R {\mathbb{R}}
\def \E {\mathbb{E}}
\def \N {\mathbb{N}}
\newtheorem{lemma}{Lemma}
\newtheorem{defi}{Definition}
\begin{document}

\begin{frontmatter}
\title{Statistical learning with indirect observations}
\runtitle{Statistical learning with indirect observations}

\begin{aug}
\author{S\'ebastien Loustau,}
\runauthor{S. Loustau}

\affiliation{Universit\'e d'Angers, LAREMA}

\address{Universit\'e d'Angers, LAREMA\\ loustau@math.univ-angers.fr}

\end{aug}
\begin{abstract}
Let $(X,Y)\in\mathcal{X}\times \mathcal{Y}$ be a random couple with unknown distribution $P$. Let $\GG$ be a class of measurable functions  and $\ell$ a loss function. The problem of statistical learning deals with the estimation of the Bayes:
$$g^*=\arg\min_{g\in\GG}\E_P \ell(g(X),Y).
$$
In this paper, we study this problem when we deal with a contaminated sample $(Z_1,Y_1),\ldots , (Z_n,Y_n)$ of i.i.d. indirect observations. Each input $Z_i$, $i=1,\ldots ,n$ is distributed from a density $Af$, where $A$ is a known compact linear operator and $f$ is the density of the direct input $X$. \\
We derive fast rates of convergence for empirical risk minimizers based on regularization methods, such as deconvolution kernel density estimators or spectral cut-off. These results are comparable to the existing fast rates in \cite{kolt} for the direct case. It gives some insights into the effect of indirect measurements in the presence of fast rates of convergence.

\end{abstract}
\end{frontmatter}

\section{Introduction}
In many real-life situations, direct data are not available and measurement errors occur. In many examples, such as medecine, astronomy, econometrics or meteorology, these measurement errors should not be neglected. Let us consider the following example from signal processing in oncology. Medical images (such as scanner, magnitude resonance imaging) play an increasingly important role in diagnosing and treating cancer patients. In the clinical setting, imaging data allows to better evaluate whether a cancer patient is responding to therapy and to adjust the therapy accordingly. In such a setting, the response variable could be the total response to the treatment, a partial response or the absence of a response. However, image interpretation and management in clinical trials triggers a number of issues such as doubtful reliability of image analysis due to a high variability in image interpretation, censoring bias, and a number of operational issues due to complex image data workflow. Consequently, biomarkers, such as bidimensional measurements of lesions, suffer from measurement errors. For these reasons, statistical learning with indirect observations may play a crucial role for this problem.

In this contribution, we address this problem in the general statistical learning context. The model can be described through 4 components:

\begin{itemize}
\item a generator \textbf{G} of random variables $X\in\mathcal{X}\subseteq \R^d$ with unknown density $f$ with respect to $\nu$, a $\sigma$-finite measure defined on $\mathcal{X}$,
\item a supervisor \textbf{S} who associates to $X$ an output $Y\in\mathcal{Y}$, according to an unknown conditional probability,
\item a known linear compact operator \textbf{A}$:L_2(\nu,\mathcal{X})\to L_2(\nu,\tilde{\mathcal{X}})$ which corrupts $X$ given $Z$ where $Z$ has density $Af$ with respect to $\nu$,
\item a Learning Machine \textbf{LM} which given $n$ i.i.d. observations $(Z_i,Y_i)$ returns an estimator $\hat{y}$ associated to any given $x$ from the generator.
\end{itemize}

\begin{minipage}{0.6\linewidth}
\begin{center}
\includegraphics[width=8cm]{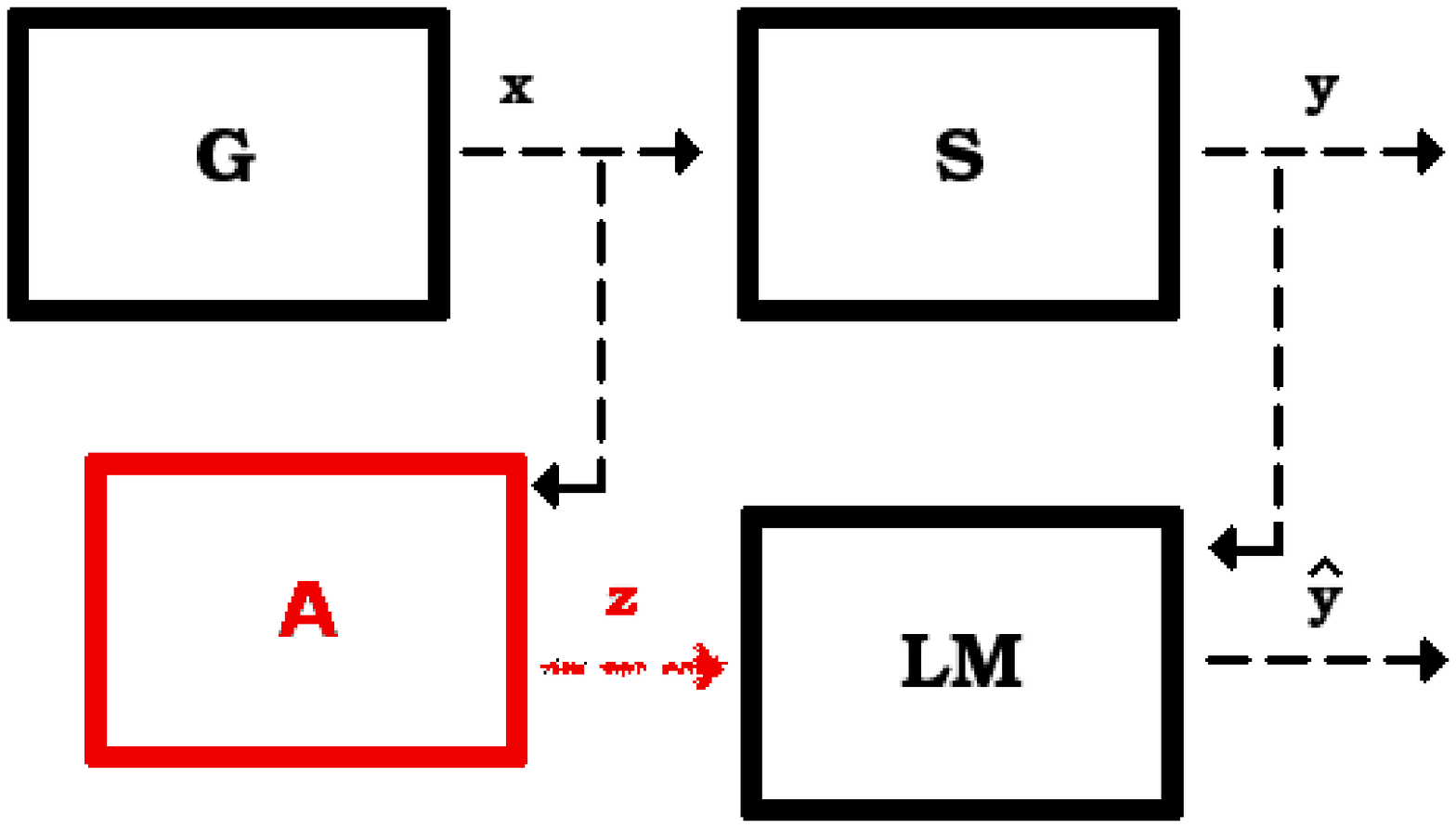}
\end{center}
\end{minipage}\hfill
\begin{minipage}{0.4\linewidth}
\textit{Figure 1. This representation has its origin in \cite{vapnik2000}. Here, the presence of the nuisance operator $A$ makes the matter an inverse problem}. 
\end{minipage}

The aim is to design a decision rule which returns, for each new generator's value $x$, a value $\hat{y}$ as close as possible to the supervisor's response $y$. Note that depending on the nature of the supervisor, Figure 1 contains models of classification, density estimation or regression. 

The more extensively studied model with indirect observations is the additive measurement error. In this case, we observe indirect inputs:
$$
Z_i=X_i+\epsilon_i,i=1,\ldots , n,
$$
where $(\epsilon_i)_{i=1}^n$ are i.i.d. with known density $\eta$. It corresponds in Figure 1 to a convolution operator $A_\eta:f\mapsto f*\eta$ and we are faced to classification with errors in variables, density deconvolution, or regression with errors in variables.\\


For these purposes, we introduce a bounded loss function $\ell:\R\times \mathcal{Y}\to [0,1]$ and a class $\GG$ of measurable functions $g:\mathcal{X}\to \R$. To define the best approximation, the problem is to choose from the given set of functions $g\in\GG$, the one that minimizes the risk functional:
\beqn
\label{risk}
R_{\ell}(g)=\E_P \ell(g(X),Y).
\eeqn
The performances of a given $g$ are measured through its non-negative excess risk, given by:
\beqn
\label{excess}
R_{\ell}(g)-R_{\ell}(g^*),
\eeqn
where $g^*$ is the minimizer over $\GG$ of the risk \eqref{risk}. It is important to point out that we do not adress in this paper the problem of model selection of $\GG$. It consists in studying the difference $R_{\ell}(g^*)-\inf_{g }R_{\ell}(g)$, where the infimum is taken over all possible measurable functions $g$. Here, the target $g^*$ corresponds to the oracle in the family $\GG$. The purpose of this work is to use Empirical Risk Minimization (ERM) strategies based on a corrupted sample to minimize the excess risk \eqref{excess}. \\

In the direct case, as we observe i.i.d. $(X_1,Y_1),\ldots,(X_n,Y_n)$ with law $P$, a classical way is to consider the ERM estimator defined as:
\beqn
\label{erm}
\hat{g}_n=\arg\min_{g\in\GG}R_n(g),
\eeqn
where $R_n(g)$ denotes the empirical risk:
\beqnn
R_n(g)=\frac{1}{n}\sum_{i=1}^n\ell(g(X_i),Y_i)=P_n\ell(g).
\eeqnn
In the sequel, the empirical measure of the direct sample $(X_1,Y_1),\ldots,(X_n,Y_n)$ will be denoted as $P_n$.  A large literature (see \cite{vapnik2000} for such a generality) deals with the statistical performances of \eqref{erm} in terms of the excess risk \eqref{excess}. To be concise, under complexity assumptions over $\GG$ (such as finite VC dimension (\cite{vapnik82}), entropy conditions (\cite{vdg}), Rademacher complexity assumptions (\cite{kolt}), it is possible to get both consistency and rates of convergence of ERM estimators (see also \cite{nedelec} in classification). The main probabilistic tool is the statement of uniform concentration of the empirical measure to the true measure. It comes from the so-called Vapnik's bound:
\beqn
\label{vb}
R_{\ell}(\hat{g}_n)-R_{\ell}(g^*)&\leq& R_{\ell}(\hat{g}_n)-R_n(\hat{g}_n)+R_n(g^*)-R_{\ell}(g^*)\notag\\
&\leq & 2\sup_{g\in\GG}|(P_n-P)l(g)|.
\eeqn
It is important to highlight that \eqref{vb} can be improved using a local approach (see \cite{toulouse}). It consists in reducing the supremum to a neighborhood of $g^*$. We do not develop these important refinements in this introduction for the sake of concision whereas it is the main ingredient of the literature cited above. It allows to get fast rates of convergence in pattern recognition.\\
 
Here, the framework is essentially different. Given a linear compact operator $A$, we observe a corrupted sample $(Z_1,Y_1),\ldots,(Z_n,Y_n)$ where $Z_i,$ $i=1,\ldots ,n$ are i.i.d. with density $Af$. As a result, the empirical measure $P_n=\frac{1}{n}\sum_{i=1}^n\delta_{(X_i,Y_i)}$ is unobservable and standard ERM \eqref{erm} is not available. Unfortunately, using the contaminated sample $(Z_1,Y_1),\ldots,(Z_n,Y_n)$ in standard ERM \eqref{erm} fails:
\beqnn
\frac{1}{n}\sum_{i=1}^nl(g(Z_i),Y_i)\longrightarrow \E l(g(Z),Y)\not= R_{\ell}(g).
\eeqnn
Due to the action of $A$, the empirical measure from the indirect sample, denoted by $\tilde{P}_n=\frac{1}{n}\sum_{i=1}^n\delta_{(Z_i,Y_i)}$, differs from $P_n$ (in the sequel, we also note as $\tilde{P}$ the corresponding true measure of $(Z,Y)$). We are facing an ill-posed inverse problem. This problem has been recently considered in \cite{pinkfloyds} for discriminant analysis with errors in variables.

In this work, we suggest a comparable strategy in statistical learning. Given a smoothing parameter $\lambda=(\lambda_1,\ldots ,\lambda_d)\in\R^d_+$, we consider the following $\lambda$-Empirical Risk Minimization ($\lambda$-ERM):
\beqn
\label{lerm}
\arg\min_{g\in\GG}R_n^\lambda(g),
\eeqn
where $R_n^\lambda(g)$ is defined in a general way as:
\beqn
\label{ler}
R_n^\lambda(g)=\int_\mathcal{X} l(g(x),y)\hat{P}_\lambda(dx,dy).
\eeqn
The measure $\hat{P}_\lambda=\hat{P}_\lambda(Z_1,Y_1,\ldots,Z_n,Y_n)$ is data-dependent to the set of indirect inputs $(Z_1,\ldots,Z_n)$. It will be related to standard regularization methods coming from the inverse problem literature (see \cite{engle}).As a consequence, it depends on a smoothing parameter $\lambda\in\R^d_+$. An explicit construction of $\hat{P}_\lambda$ and the empirical risk \eqref{ler} is detailled in Section 2 in pattern recognition with applications in Section 3.\\

To study the performances of the minimizer $\hat{g}_n^\lambda$ of the empirical risk \eqref{ler}, it is possible to use empirical processes theory in the spirit of \cite{vdg,wvdv} or more recently \cite{kolt}. Following \eqref{vb}, in the presence of indirect observations, we can write\footnote{where with a slight abuse of notations, we write:
$$
(R_{\ell}-R^\lambda_\ell)(g-g')=R_{\ell}(g)-R_{\ell}(g')-R_{\ell}^\lambda(g)+R_{\ell}^\lambda(g').
$$}:
\beqn
\label{nvb}
R_{\ell}(\hat{g}_n^\lambda)-R_{\ell}(g^*)&\leq& R_{\ell}(\hat{g}_n^\lambda)-R_n^\lambda(\hat{g}_n^\lambda)+R_n^\lambda(g^*)-R_{\ell}(g^*)\notag\\
&\leq& R_{\ell}^\lambda(\hat{g}_n^\lambda)-R_n^\lambda(\hat{g}_n^\lambda)+ R_n^\lambda(g^*)-R_{\ell}^\lambda(g^*)+ (R_{\ell}-R_{\ell}^\lambda)(\hat{g}_n^\lambda -g^*)\notag\\
&\leq & \sup_{g\in\GG}|(R_n^\lambda-R_{\ell}^\lambda)(g^*-g)|+\sup_{g\in\GG}|(R_{\ell}^\lambda-R_{\ell})(g-g^*)|,
\eeqn
where in the sequel, under integrability conditions and using Fubini: 
\beqn
\label{rl}
R_{\ell}^\lambda(g)=\E R_n^\lambda(g)= \int \ell(g(x),y)\E \hat{P}_\lambda(dx,dy).
\eeqn
Bound \eqref{nvb} is called Inverse Vapnik's bound. It consists in two terms: 
\begin{itemize}
\item A variance term $\sup_{g\in\GG}|(R_n^\lambda-R_{\ell}^\lambda)(g^*-g)|$ related to the estimation of $g^*$: this term can be controlled thanks to uniform exponential inequalities such as Talagrand's concentration inequality, applied to a class of functions depending on a parameter.
\item A bias term $\sup_{g\in\GG}|(R_{\ell}^\lambda-R_{\ell})(g-g^*)|$: it comes from the estimation of $P$ into the expression of $R_{\ell}(g)$ with estimator $\hat{P}_\lambda$. This term is specific to our method. However, it seems to be related to the usual bias term in nonparametric density estimation. Indeed, we can see easily that:
\beqnn
R_{\ell}^\lambda(g)-R_{\ell}(g)=\int \ell(g(x),y)[\E \hat{P}_\lambda-P_\lambda](dx,dy).
\eeqnn
\end{itemize}
The choice of $\lambda$ is crucial in the decomposition \eqref{nvb}. We will show below that the variance term exploses when $\lambda$ tends to zero whereas the bias term vanishes. Parameter $\lambda$ has to be chosen as a trade-off between these two terms, and as a consequence will depend on unknown parameters. The problem of adaptation is not adressed in this paper but it is an interesting future direction.\\

In this work, we consider $\mathcal{Y}=\{0,1,\ldots,M\}$ for $M\geq 1$. In other words, we study the model of classification with indirect observations (see \cite{jaune} for a survey in the direct case). The contribution is organized as follows. In Section 2, we propose to give an explicit construction of the empirical risk \eqref{ler} in classification thanks to the set of indirect observations. We state a general upper bound for the solution of the $\lambda$-ERM \eqref{lerm} under minimal assumptions over the loss function $\ell$ and the complexity of $\mathcal{G}$. It gives a generalization of the results of \cite{kolt} when dealing with indirect observations. Section 3 gives applications of the result of Section 2 in two particular settings. In the errors-in-variables case, we generalize the results of \cite{pinkfloyds}. For the general case, we use projection in the spectrum of operator $A$. We state rates of convergence which generalize the existing fast rates of convergence pointed out by \cite{kolt}. There coincide with a recent lower bound proposed in discriminant analysis by \cite{pinkfloyds}. Section 4 is devoted to a discussion related to the complexity assumption when we deal with indirect observations whereas Section 5 concludes the paper. Section 6 is dedicated to the proofs of the main results.

\section{General Upper Bound}
\label{sec2}
In this section, we detail the construction of the empirical risk \eqref{ler} in classification. We give minimal assumptions to control the expected excess risk \eqref{excess} of the procedure. The construction of the empirical risk is based on the following decomposition of the true risk:
\beqn
\label{classifrisk}
R_{\ell}(g)=\sum_{y\in\mathcal{Y}} p(y)\int_\mathcal{X} \ell(g(x),y)f_y(x)\nu(dx),
\eeqn
where $f_y(\cdot)$ is the conditional density of $X|Y=y$ and $p(y)=\P(Y=y)$, for any $y\in\mathcal{Y}=\{0,\ldots,M\}$. With such a decomposition, we suggest to estimate each $f_y(\cdot)$ using a nonparametric density estimator. To state a general upper bound, we consider a family of estimators such as:
\beqn
\label{est}
\forall y\in\mathcal{Y},\hat{f}_y(x)=\frac{1}{n_y}\sum_{i=1}^{n_y}k_{\lambda}(Z^y_i,x),
\eeqn
where $n_y=\mathrm{card}\{i:Y_i=y\}$, $k_\lambda:\mathcal{\tilde{X}}\times\mathcal{X}\to \R$ and the set of inputs $(Z_i^y)_{i=1}^{n_y}=\{Z_i,i=1,\ldots,n:Y_i=y\}$.\\
Here, we consider a constant bandwidth $\lambda$ for any $y\in\mathcal{Y}$ in $\hat{f}_y$. It illustrates rather well the difference of our approach with plug-in type estimators (see \cite{AT} for instance). If we want to estimate $f_y$, for each $y\in\mathcal{Y}$, the bandwidth $\lambda$ in \eqref{est} has to depend on $n_y$ and the regularity of $f_y$. However, the aim is to estimate the true risk $R_{\ell}(g)$. To get satisfying upper bounds, we will see that $\lambda$ does not necessary depend on the value $y\in\mathcal{Y}$.

It is also important to remark that assumption \eqref{est} provides a variety of nonparametric estimators of $f_y$. For instance, if $Af=f*\eta$ is a convolution operator, we can construct a deconvolution kernel provided that the noise has a nonnull Fourier transform. This is a rather classical approach in deconvolution problems (see \cite{Fan} or \cite{meister}). Another standard example of \eqref{est} is to consider projection estimators of the conditional densities using the SVD of operator $A$ or many other regularization methods (see \cite{engle}). Section \ref{application} describes these examples.

Finally we plug estimators \eqref{est} in the true risk \eqref{classifrisk} to get an empirical risk defined as:
\beqnn
R_n^\lambda(g)=\sum_{y\in\mathcal{Y}} \int_\mathcal{X} \ell(g(x),y)\hat{f}_y(x)\nu(dx)\hat{p}(y),
\eeqnn
where $\hat{p}(y)=\frac{n_y}{n}$ is an estimator of the quantity $p(y)=\P(Y=y)$. Thanks to \eqref{est}, this empirical risk can be written as:
\beqn
\label{ler2}
R_n^\lambda(g)=\frac{1}{n}\sum_{i=1}^n\ell_\lambda(g,(Z_i,Y_i)),
\eeqn
where $\ell_\lambda(g,(z,y))$ is a modified version of $\ell(g(x),y)$ given by:
\beqnn
\ell_\lambda(g,(z,y))=\int_\mathcal{X} \ell(g(x),y)k_{\lambda}(z,x)\nu(dx).
\eeqnn
\\

In this section, we study general upper bounds for the expected excess risk of the estimator:
\beqn
\label{decerm}
\hat{g}_n^\lambda=\arg\min \frac{1}{n}\sum_{i=1}^nl_\lambda(g,(Z_i,Y_i)).
\eeqn
In case no such minimum exists, we can consider a $\delta$-approximate minimizer as in \cite{empimini} without significant change in the results. \\
The main idea is to use iteratively a deviation inequality for suprema of empirical processes due to \cite{bousquet}. It allows to control the increments of the empirical process:
\beqnn
\nu_n^\lambda(g)=\frac{1}{\sqrt{n}}\sum_{i=1}^n \left(\ell_\lambda(g,(Z_i,Y_i))-\E \ell_\lambda(g,(Z,Y))\right).
\eeqnn 
Here, it is important to note that Talagrand's type inequality has to be applied to the class of functions $\{(z,y)\mapsto \ell_\lambda(g,(z,y)),g\in\GG\}$. This class depends on a regularization parameter $\lambda$. This parameter will be calibrated as a function of $n$ and that's why the deviation inequality has to be used carefully. For this purpose, we introduce in Definition 1 particular classes $\{l_\lambda(g),g\in\GG\}$.
\begin{defi}
\label{LB}
We say that the class $\{\ell_\lambda(g),g\in\GG\}$ is a LB-class (Lipschitz bounded class) with respect to $\mu$ with parameters $(c(\lambda),K(\lambda))$ if these two properties hold:
\begin{description}
\item[(L$_\mu$)] $\{\ell_\lambda(g),g\in\GG\}$ is Lipschitz w.r.t. $\mu$ with constant $c(\lambda)$:
\beqnn
\forall g,g'\in\GG,\,\Arrowvert \ell_\lambda(g)-\ell_\lambda(g')\Arrowvert_{L_2(\tilde{P})}\leq c(\lambda)\Arrowvert \ell(g)-\ell(g')\Arrowvert_{L_2(\mu)}.
\eeqnn
\item[(B)] $\{\ell_\lambda(g),g\in\GG\}$ is uniformly bounded with constant $K(\lambda)$:
$$
\sup_{g\in\GG}\sup_{(z,y)}| \ell_\lambda(g,(z,y))|\leq K(\lambda).
$$
\end{description}
\end{defi}
A LB-class of loss function is Lipschitz and bounded with constants which depend on $\lambda$. These properties are necessary to derive explicitly the upper bound of the variance in \eqref{nvb} as a function of $\lambda$.

More precisely, the Lipschitz property \textbf{(L$_\mu$)} is a key ingredient to control the complexity of the class of functions $\{\ell_\lambda(g),g\in\GG\}$. In the sequel, we use the following geometric complexity parameter:
$$
\tilde{\omega}_n(\GG,\delta,\mu)=\E\sup_{g,g'\in\mathcal{G}:
\no \ell(g)-\ell(g')\no_{L_2(\mu)}\leq\delta}\left|(\tilde{P}-\tilde{P}_n)(\ell_\lambda(g)-\ell_\lambda(g'))\right|.
$$ 
The control of such a quantity is proposed in Section \ref{imc} thanks to standard entropy conditions related to the class $\mathcal{G}$.

Finally \textbf{(B)} is necessary to apply Bousquet's inequality to the class of functions $\{\ell_\lambda(g)-\ell_\lambda(g'),\,g\in\mathcal{G}\}$, which depends on the smoothing parameter $\lambda$. This condition could be relaxed by dint of recent advances on empirical processes in an unbounded framework (see \cite{nonexactlecue} or \cite{lederer}).

\begin{defi}
\label{KKb}
For $\kappa\geq 1$, we say that $\mathcal{F}$ is a Bernstein class with respect to $\mu$  with parameter $\kappa$ if there exists $\kappa_0\geq 0$ such that  for every $f\in\FF$:
\beqnn
\no f\no_{L_2(\mu)}^2\leq \kappa_0[\E_P f]^{\frac{1}{\kappa}}.
\eeqnn
\end{defi}
This assumption first appears in \cite{empimini} for $\mu=P$ when $\mathcal{F}=\{\ell(g)-\ell(g'),g,g'\in\mathcal{G}\}$ is the excess loss class. It allows to control the excess risk in statistical learning using functional's Bernstein inequality such as Talagrand's type inequality. It goes back to the standard margin assumption in classification (see \cite{mammen,tsybakov2004}), where in this case $\kappa=\frac{\alpha+1}{\alpha}$ for a so-called margin parameter $\alpha\geq 0$.

Definition \ref{KKb} has to be combined with the Lipschitz property of Definition \ref{LB}. It allows us to have the following serie of inequalities:
\beqn
\label{serie}
\Arrowvert \ell_\lambda(g)-\ell_\lambda(g^*)\Arrowvert_{L_2(\tilde{P})}\leq c(\lambda)\Arrowvert f\Arrowvert_{L_2(\mu)}\leq c(\lambda)\left(\E_Pf\right)^{\frac{1}{2\kappa}},
\eeqn
where $f\in\FF=\{\ell(g)-\ell(g^*),\,g\in\mathcal{G}\}$ is the excess loss class. 

Last definition provides a control of the bias term in \eqref{nvb} as follows:
\begin{defi}
\label{app}
The class $\{\ell_\lambda(g),g\in\GG\}$  has approximation function $a(\lambda)$ and residual constant $0<r<1$ if the following holds:
\beqnn
\forall g\in\GG,\,(R_{\ell}-R^\lambda_l)(g-g^*)\leq a(\lambda)+r(R_{\ell}(g)-R_{\ell}(g^*)),
\eeqnn
where with a slight abuse of notations, we write:
$$
(R_{\ell}-R^\lambda_\ell)(g-g^*)=R_{\ell}(g)-R_{\ell}(g^*)-R_{\ell}^\lambda(g)+R_{\ell}^\lambda(g^*).
$$
\end{defi}
This definition warrants a control of the bias in the Inverse Vapnik's bound \eqref{nvb}. It is straightforward that with Definition \ref{app}, we get a control of the excess risk as follows:
\beqnn
R_{\ell}(\hat{g}_n^\lambda)-R_{\ell}(g^*)&\leq& \frac{1}{1-r}\left(\sup_{g\in\GG(1)}|(\tilde{P}_n-\tilde{P})(\ell_\lambda(g)-\ell_\lambda(g^*))|+a(\lambda)\right),
\eeqnn
where in the sequel:
$$
\mathcal{G}(\delta)=\{g\in\mathcal{G}:R_{\ell}(g)-R_{\ell}(g^*)\leq \delta\}.
$$
Explicit functions $a(\lambda)$ and residual constant $r<1$ are obtained in Section 3. There depend on the regularity conditions and allow to get rates of convergence.

We are now on time to state the main result of this section.
\begin{thm}
\label{mainresult}
Consider a LB-class $\{\ell_\lambda(g),g\in\GG\}$ with respect to $\mu$ with parameters $(c(\lambda),K(\lambda))$ and approximation function $a(\lambda)$ such that:
\beqn
\label{biascontrol}
a(\lambda)\leq C_1 \left(\frac{c(\lambda)}{\sqrt{n}}\right)^{\frac{2\kappa}{2\kappa+\rho -1}}\mbox{ and }
K(\lambda)\leq \frac{c(\lambda)^{\frac{2\kappa}{2\kappa+\rho-1}}n^{\frac{\kappa+\rho-1}{2\kappa+\rho-1}}}{1+\log n},
\eeqn
for some $C_1>0$.\\
Suppose $\{\ell(g)-\ell(g^*),g\in\mathcal{G}\}$ is Bernstein with respect to $\mu$ with parameter $\kappa>1$ where $g^*\in\arg\min_\mathcal{G} R_{\ell}(g)$ is unique. Suppose there exists $0<\rho<1$ such that for every $\delta>0$:
\beqn
\label{modulus}
\tilde{\omega}_n(\GG,\delta,\mu)=\E\sup_{g,g'\in\mathcal{G}:\no \ell(g)-\ell(g')\no_{L_2(\mu)}\leq\delta}|\tilde{P}-\tilde{P}_n|(\ell_\lambda(g)-\ell_\lambda(g'))\leq C_2 \frac{c(\lambda)}{\sqrt{n}}\delta^{1-\rho},
\eeqn
for some $C_2>0$.\\
Then estimator $\hat{g}_n^\lambda$ defined in \eqref{decerm} satisfies, for $n$ great enough:
\beqnn 
\E R_{\ell}(\hat{g}_n^\lambda)-R_{\ell}(g^*)\leq C\left(\frac{c(\lambda)}{\sqrt{n}}\right)^{\frac{2\kappa}{2\kappa+\rho -1}},
\eeqnn
where $C=C(C_1,C_2,\kappa,\kappa_0,\rho)>0$.
\end{thm}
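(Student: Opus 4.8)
The plan is to run a localization (``peeling'') argument around $g^*$, in the spirit of \cite{kolt,empimini}, feeding Bousquet's inequality iteratively to the $\lambda$-dependent excess-loss class. The starting point is the localized Inverse Vapnik bound already recorded before the statement: using the optimality of $\hat{g}_n^\lambda$ for $R_n^\lambda$, the identity $\E R_n^\lambda = R_{\ell}^\lambda$ from \eqref{rl}, and Definition~\ref{app}, one gets on the event $\{\hat{g}_n^\lambda\in\GG(\delta)\}$ that
$$(1-r)\bigl(R_{\ell}(\hat{g}_n^\lambda)-R_{\ell}(g^*)\bigr)\le U_n(\delta)+a(\lambda),\qquad U_n(\delta):=\sup_{g\in\GG(\delta)}\bigl|(\tilde{P}_n-\tilde{P})(\ell_\lambda(g)-\ell_\lambda(g^*))\bigr|.$$
Since $\ell$ takes values in $[0,1]$ we have $\GG(1)=\GG$, so the case $\delta=1$ holds unconditionally. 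The whole point is then to show that $U_n(\delta)+a(\lambda)$ is of order $\delta^*:=(c(\lambda)/\sqrt n)^{2\kappa/(2\kappa+\rho-1)}$ as soon as $\delta$ is already of that order, and to descend to that level by contraction.

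Next I would control $U_n(\delta)$ with high probability. By the Bernstein assumption, $\GG(\delta)\subseteq\{g:\|\ell(g)-\ell(g^*)\|_{L_2(\mu)}\le\sqrt{\kappa_0}\,\delta^{1/(2\kappa)}\}$, so \eqref{modulus} (with $g'=g^*$) gives $\E U_n(\delta)\le C_2\kappa_0^{(1-\rho)/2}\,\frac{c(\lambda)}{\sqrt n}\,\delta^{(1-\rho)/(2\kappa)}$; meanwhile \textbf{(B)} bounds the class $\{\ell_\lambda(g)-\ell_\lambda(g^*):g\in\GG(\delta)\}$ in sup norm by $2K(\lambda)$ and the chain \eqref{serie} bounds its $L_2(\tilde{P})$-diameter, hence its variances, by $\kappa_0 c(\lambda)^2\delta^{1/\kappa}$. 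Bousquet's deviation inequality \cite{bousquet} applied to this class with $t=t_n\asymp 1+\log n$ then yields, with probability at least $1-e^{-t_n}$,
$$U_n(\delta)\ \le\ \phi_n(\delta):=C'\!\left(\frac{c(\lambda)}{\sqrt n}\,\delta^{\frac{1-\rho}{2\kappa}}+c(\lambda)\sqrt{\frac{t_n}{n}}\,\delta^{\frac{1}{2\kappa}}+\frac{K(\lambda)t_n}{n}\right),$$
after absorbing the mixed Bousquet term $\sqrt{t_nK(\lambda)\E U_n(\delta)/n}$ into the first and third terms by Young's inequality.

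The peeling itself: put $\delta_0=1$ and $\delta_{k+1}=(1-r)^{-1}(\phi_n(\delta_k)+a(\lambda))$. Each $\delta_k$ is deterministic, so a union bound over the finitely many steps — in fact only $O(\log\log n)$ are needed to reach a neighbourhood of the fixed point, since the map contracts geometrically in $\log\delta$ with ratio $\max((1-\rho)/(2\kappa),1/(2\kappa))<1$ — costs only $O(\log\log n)\,e^{-t_n}$; on the intersection of these events, $\{\hat{g}_n^\lambda\in\GG(\delta_k)\}$ forces $\{\hat{g}_n^\lambda\in\GG(\delta_{k+1})\}$ via the first display. The map $\delta\mapsto(1-r)^{-1}(\phi_n(\delta)+a(\lambda))$ is the sum of two sub-linear powers of $\delta$ plus a constant, and an elementary computation of its fixed point shows that the $\delta^{(1-\rho)/(2\kappa)}$ term alone produces exactly $(c(\lambda)/\sqrt n)^{2\kappa/(2\kappa+\rho-1)}$; the first inequality in \eqref{biascontrol} makes $a(\lambda)$ of that same order, the second (with $t_n\asymp 1+\log n$) makes $K(\lambda)t_n/n$ of that order, and the remaining $c(\lambda)\sqrt{t_n/n}\,\delta^{1/(2\kappa)}$ term is of strictly smaller order for $n$ large. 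Hence, on the good event, $R_{\ell}(\hat{g}_n^\lambda)-R_{\ell}(g^*)\le C\,\delta^*$. Finally, on the complementary event, of probability $O((\log\log n)e^{-t_n})=o(\delta^*)$, use the trivial bound $R_{\ell}(\hat{g}_n^\lambda)-R_{\ell}(g^*)\le 1$; taking expectations gives the claim.

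The main obstacle is the bookkeeping of the third step: verifying that the iteration terminates in a controlled number of steps, that the accumulated failure probability stays negligible against $\delta^*$, and — above all — that the fixed point of the iteration equals the announced rate, which is precisely where the two calibrations in \eqref{biascontrol} are used and must be matched exponent by exponent. A secondary point requiring care is that the excess-loss class $\{\ell_\lambda(g)-\ell_\lambda(g^*)\}$ moves with $n$ through $\lambda=\lambda_n$; this is harmless because Bousquet's inequality, \textbf{(B)}, \eqref{serie} and \eqref{modulus} are all non-asymptotic and uniform in $\lambda$, but it forces one to track $c(\lambda),K(\lambda),a(\lambda)$ explicitly throughout rather than hiding them in constants.
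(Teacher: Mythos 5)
Your proposal is correct and follows essentially the same route as the paper: the localized Inverse Vapnik bound combined with Definition~\ref{app}, the Bernstein condition feeding the modulus \eqref{modulus}, Bousquet's inequality applied iteratively over shells, and a fixed-point computation in which the two calibrations of \eqref{biascontrol} kill the bias and the $K(\lambda)t_n/n$ term. The only differences are cosmetic packaging — your deterministic recursion $\delta_{k+1}=(1-r)^{-1}(\phi_n(\delta_k)+a(\lambda))$ versus the paper's $q$-adic shells and $\dagger$-transform, your absorption of the mixed Bousquet term by Young's inequality versus keeping it explicit, and your single choice $t_n\asymp 1+\log n$ plus the trivial bound on the bad event versus the paper's tail bound in $t$ — none of which changes the substance of the argument.
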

\hspace{-0.6cm} The proof of this result is presented in Section 6. Here follows some remarks.

This upper bound generalizes the result presented in \cite{kolt} to the indirect framework. Theorem \ref{mainresult} provides rates of convergence  $\left(c(\lambda)/\sqrt{n}\right)^{2\kappa/2\kappa+\rho-1}$. In the noise-free case, with standard ERM estimators,  \cite{tsybakov2004,kolt} obtain fast rates $n^{-\kappa/2\kappa+\rho-1}$. In the presence of contaminated inputs, rates are slower since $c(\lambda)\to +\infty$ as $n\to +\infty$. Hence, the price to pay for the inverse problem is quantified by the Lipschitz constant $c(\lambda)$ in Definition \ref{LB}.

The behavior of constants $c(\lambda)$ depend on the difficulty of the inverse problem through the degree of ill-posedness of operator $A$. Section \ref{application} proposes to deal with midly ill-posed inverse problems. In this case, $c(\lambda)$ depend polynomially on $\lambda$. 

The Lipschitz property introduced in Definition \ref{LB} is central. Gathering with the complexity assumption \eqref{modulus}, it leads to a control of the variance term in decomposition \eqref{nvb}. The first statement of condition \eqref{biascontrol} gives the order of the bias term. It leads to the excess risk bound.

The second part of \eqref{biascontrol} is due to the use of a deviation's inequality from \cite{bousquet} to the class $\{l_\lambda(g),g\in\mathcal{G}\}$. In Section 3, we give explicit constants $c(\lambda)$ and $K(\lambda)$. It appears that this assumption is always guaranteed.

The control of the modulus of continuity in \eqref{modulus} is specific to the indirect framework. It depends on the Lipschitz constant $c(\lambda)$. A comparable hypothesis can be found in the direct case in \cite{kolt}, except for the constant $c(\lambda)$. Section 4 is dedicated to the statement of \eqref{modulus}. Under standard complexity conditions, such as $L_2(\mu)$-entropy of the loss class $\{\ell(g),g\in\GG\}$, \eqref{modulus} holds true (see Lemma \ref{dudleynoisy} in Section 4 and the related discussion). It allows us to consider many examples of hypothesis spaces from finite VC classes to more complex functional classes such as kernel classes.

At this time, it is important to note that Theorem \ref{mainresult} depends on measure $\mu$ introduced in Definition \ref{LB} and \ref{KKb}. In the rest of the paper, we will consider two particular cases: $\mu=\nu\otimes P_Y$ ($\mu=\nu_Y$ for short in the sequel) and $\mu=P$. The Lipschitz property \textbf{(L$_\mu$)} with $\mu=P$ is stronger than \textbf{(L$_\mu$)} with $\mu=\nu\otimes P_Y$. Indeed, for any measurable function  $h:\mathcal{X}\times \mathcal{Y}\to\R$, if $\no f_y\no_\infty\leq C_y$, $\forall y\in\mathcal{Y}$:
$$
\E_Pf^2\leq \max_{y\in\mathcal{Y}}C_y\sum_{y\in\mathcal{Y}}p_y\int f(x,y)^2\nu(dx)= \max_{y\in\mathcal{Y}}C_y\,\Arrowvert f\Arrowvert^2_{L_2(\nu_Y)}.
$$
Since $\Arrowvert \cdot\Arrowvert_{L_2(P)}\leq C\Arrowvert \cdot \Arrowvert_{L_2(\nu_Y)}$ for some $C>0$, a Bernstein class with respect to $\nu_Y$ is also Bernstein with respect to $P$ (see Definition \ref{KKb}). The most favorable case ($\mu=\nu_Y$) arises in binary classification (see \cite{tsybakov2004} or \cite{nedelec}). Section \ref{application} states rates of convergence in these two different settings.

Finally, Theorem \ref{mainresult} requires the unicity of the Bayes $g^*$. Such a restriction can be avoided using a more sophisticated geometry as in \cite[Section 4]{kolt}.

\section{Applications}
\label{application}
In this section, we propose to apply the general upper bound of Theorem \ref{mainresult} to give rates of convergence of $\lambda$-ERM in two distinct frameworks. The first result deals with the errors-in-variables case where operator $A$ is a convolution product. Using kernel deconvolution estimators, we obtain fast rates of convergence. Then, we consider the general case using a family of projection estimators into the SVD basis of the operator. We also consider two different settings in the sequel, namely $\mu=\nu_Y$ and $\mu=P$ (see the discussion at the end of Section 2). In this case, we restrict the study to a compact set $K\subseteq \mathcal{X}$.
\subsection{Errors-in-variables case}
The elementary model of indirect observations is the additive measurement error model with known error density. In this case, we suppose that we observe a corrupted training set $(Z_i,Y_i),\,i=1,\ldots, n$ where:
$$
Z_i=X_i+\epsilon_i,\,i=1,\ldots ,n.
$$
The sequence of random variables $\epsilon_1,\ldots, \epsilon_n$ are i.i.d. $\R^d$-random variables with density $\eta$ with respect to the Lebesgue measure on $\R^d$. In this situation, operator $A$ is exactly known as a convolution product with density $\eta$. Note that in practical applications, this knowledge cannot be guaranteed. However, in most examples, we are able to estimate the error density $\eta$ from replicated measurements. In the sequel, we do not address this problem and we focus on the deconvolution step itself. \\In the errors-in-variables case, the difficulty of this inverse problem can be represented thanks to the asymptotic behavior of the Fourier transform of the noise density $\eta$. Assumption \textbf{(A1)} below concerns the asymptotic behavior of the characteristic function of the noise distribution. These kind of restrictions are standard in deconvolution problems (see \cite{Fan,butucea,meister}). 
 \\

\noindent
\textbf{(A1)} \textit{There exist $(\beta_1,\dots,\beta_d)'\in \R_+^d$ such that for all $i\in \lbrace 1,\dots, d \rbrace$, $\beta_i>\frac{1}{2}$ and:
$$ \left| \mathcal{F}[\eta_i](t) \right| \sim |t|^{-\beta_i}, \mathrm{as} \ t\to +\infty,$$
where $\mathcal{F}[\eta_i]$ denotes the Fourier transform of $\eta_i$. Moreover, we assume that $\mathcal{F}[\eta_i](t) \not = 0$ for all $t\in \R$ and $i\in \lbrace 1,\dots, d \rbrace$.\\}\\
Assumption \textbf{(A1)} focuses on moderately ill-posed inverse problems by considering polynomial decay of the Fourier transform. Notice that straightforward modifications in the proofs allow to consider severely ill-posed inverse problems.\\
In this framework,  we construct kernel deconvolution estimators of the densities $f_y,y\in\mathcal{Y}$. For this purpose, let us introduce $\mathcal{K}=\prod_{j=1}^d \mathcal{K}_j:\R^d \to \R$ a $d$-dimensional function defined as the product of $d$ unidimensional function $\mathcal{K}_j$. 
Then if we denote by $\lambda=(\lambda_1,\dots,\lambda_d)\in\R^d_+$ a set of (positive) bandwidths, we define $\mathcal{K}_\eta$ as
\begin{eqnarray}
\mathcal{K}_{\eta} & : & \R^d \to \R \nonumber \\
& & t \mapsto \mathcal{K}_\eta(t) = \FF^{-1}\left[ \frac{\FF[\mathcal{K}](\cdot)}{\FF[\eta](\cdot/\lambda)}\right](t).
\label{dk}
\end{eqnarray}
To apply Theorem \ref{mainresult}, we also need the following assumption on the regularity of the conditional densities:
\\

\noindent
\textbf{(R1)} \textit{Given $\gamma,L>0$, for any $y\in\mathcal{Y}$, $f_y\in\mathcal{H}(\gamma,L)$ where:
\beqnn
\mathcal{H}(\gamma,L)=\{f\in\Sigma(\gamma,L):f \mbox{ are bounded probability densities w.r.t. Lebesgue}\},
\eeqnn
and $\Sigma(\gamma,L)$ is the class of isotropic H\"older continuous functions $f$ having continuous partial derivatives up to order $\lfloor \gamma \rfloor$, the maximal integer strictly less than $\gamma$ and such that:
\beqnn
|f(y)-p_{f,x}(y)|\leq L|x-y|^\gamma,
\eeqnn
where $p_{f,x}$ is the Taylor polynomial of $f$ at order $\lfloor \gamma \rfloor$ at point $x$.
}\\

\hspace{-0.6cm} This H\"older regularity is standard to control the bias term of kernel estimators in density estimation or density deconvolution (see for instance \cite{booktsybakov}).

In this context, for all $g\in\mathcal{G}$, we define the $\lambda$-ERM \eqref{decerm} with empirical risk:
\beqn
\label{ldec}
R_n^\lambda(g)=\frac{1}{n}\sum_{i=1}^n\ell_\lambda(g,(Z_i,Y_i)),
\eeqn
where $\ell_\lambda(g,(z,y))$ is given by:
\beqnn
\ell_\lambda(g,(z,y))=\int_{\R^d} \ell(g(x),y)\frac{1}{\lambda}\mathcal{K}_\eta\left(\frac{z-x}{\lambda}\right)dx,
\eeqnn
where with a slight abuse of notations we write for any $z=(z_1,\ldots,z_d)$,\\ $x=(x_1,\ldots,x_d)\in\R^d$, $\lambda=(\lambda_1,\ldots,\lambda_d)\in\R^d_+$:
$$
\frac{1}{\lambda}\mathcal{K}_\eta\left(\frac{z-x}{\lambda}\right)=\Pi_{i=1}^d\frac{1}{\lambda_i}\mathcal{K}_\eta\left(\frac{z_1-x_1}{\lambda_1},\cdots,\frac{z_d-x_d}{\lambda_d}\right).
$$
Theorem \ref{deconv} below presents the rates of convergence of $\lambda$-ERM  under assumptions \textbf{(A1)-(R1)}.
\begin{thm}
\label{deconv}
Suppose $\{\ell(g)-\ell(g^*),g\in\mathcal{G}\}$ is a Bernstein class with respect to $\nu_Y$ with parameter $\kappa\geq 1$ and $\ell(g(\cdot),y)\in L_2(\R^d)$, for any $y\in\mathcal{Y}$. Suppose $0<\rho<1$ exists such that: 
\beqnn
\tilde{\omega}_n(\GG,\delta,\nu_Y)\leq C_1 \frac{c(\lambda)}{\sqrt{n}}\delta^{1-\rho},\forall 0<\delta<1,
\eeqnn
for some $C_1>0$.\\
Under \textbf{(A1)} and \textbf{(R1)}, we have, for $n$ great enough:
\beqnn 
\sup_{f_y\in\mathcal{H}(\gamma,L)}\E R_{\ell}(\hat{g})-R_{\ell}(g^*)\leq C n^{-\frac{\kappa\gamma}{\gamma(2\kappa+\rho-1)+(2\kappa-1)\bar{\beta}}},
\eeqnn
where $\bar{\beta}=\sum_{i=1}^d\beta_i$ and $ \lambda=(\lambda_1,\dots,\lambda_d)$ is given by:
\beqn
\label{deconvchoice}
\forall i\in\{1,\ldots,d\}, \,\lambda_i= n^{-\frac{2\kappa-1}{2\gamma(2\kappa+\rho-1)+2(2\kappa-1)\bar{\beta}}}.
\eeqn
\end{thm}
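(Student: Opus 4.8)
The strategy is to derive Theorem~\ref{deconv} from the general bound of Theorem~\ref{mainresult} applied with $\mu=\nu_Y$. Since the Bernstein condition and the modulus bound \eqref{modulus} are already part of the hypotheses, what remains is: (i) to identify the LB-class constants $c(\lambda),K(\lambda)$ and the approximation function $a(\lambda)$ produced by the deconvolution choice of $\hat P_\lambda$; (ii) to verify that both inequalities in \eqref{biascontrol} hold at the bandwidth \eqref{deconvchoice}; and (iii) to read off the resulting rate in $n$. Throughout, $\mathcal{F}[\mathcal{K}]$ is taken compactly supported (as is standard for deconvolution kernels), $\mathcal{K}$ of order $\geq\lfloor\gamma\rfloor$, and $\eta=\eta_1\otimes\cdots\otimes\eta_d$, so that \textbf{(A1)} applies.

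First I would compute the LB-class constants. Since $\ell_\lambda(g,(z,y))-\ell_\lambda(g',(z,y))=\bigl(h_y*\tfrac1\lambda\mathcal{K}_\eta(\cdot/\lambda)\bigr)(z)$ with $h_y(x)=\ell(g(x),y)-\ell(g'(x),y)$, Plancherel's identity gives $\|h_y*\tfrac1\lambda\mathcal{K}_\eta(\cdot/\lambda)\|_{L_2(\nu)}\leq\|\mathcal{F}[\tfrac1\lambda\mathcal{K}_\eta(\cdot/\lambda)]\|_\infty\,\|h_y\|_{L_2(\nu)}$, and from $\mathcal{F}[\tfrac1\lambda\mathcal{K}_\eta(\cdot/\lambda)](s)=\mathcal{F}[\mathcal{K}](\lambda_1 s_1,\dots,\lambda_d s_d)/\mathcal{F}[\eta](s)$ together with \textbf{(A1)} one gets $\|\mathcal{F}[\tfrac1\lambda\mathcal{K}_\eta(\cdot/\lambda)]\|_\infty\leq C\prod_{i=1}^d\lambda_i^{-\beta_i}$. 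Combined with $\|\cdot\|_{L_2(\tilde P)}^2\leq(\max_y\|Af_y\|_\infty)\|\cdot\|_{L_2(\nu_Y)}^2$ and $\|Af_y\|_\infty\leq\|f_y\|_\infty$ (uniformly bounded over $\mathcal{H}(\gamma,L)$) this yields the Lipschitz constant $c(\lambda)=C\prod_i\lambda_i^{-\beta_i}$, hence $c(\lambda)\asymp\lambda^{-\bar\beta}$ at the common bandwidth. For the uniform bound, Cauchy--Schwarz and Plancherel give $|\ell_\lambda(g,(z,y))|\leq\|\ell(g(\cdot),y)\|_{L_2(\R^d)}\|\tfrac1\lambda\mathcal{K}_\eta(\cdot/\lambda)\|_{L_2(\R^d)}\leq C\prod_i\lambda_i^{-\beta_i-1/2}$ under \textbf{(A1)}, so $K(\lambda)=C\prod_i\lambda_i^{-\beta_i-1/2}$; a direct comparison of exponents then shows that at \eqref{deconvchoice} the second inequality of \eqref{biascontrol} holds for all $n$ large.

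Next I would produce the approximation function. The key point is the deconvolution identity $\tfrac1\lambda\mathcal{K}_\eta(\cdot/\lambda)*\eta=\tfrac1\lambda\mathcal{K}(\cdot/\lambda)$, which gives $\E[\hat p(y)\hat f_y(x)]=p(y)\bigl(f_y*\tfrac1\lambda\mathcal{K}(\cdot/\lambda)\bigr)(x)$, so that $R_\ell^\lambda(g)-R_\ell(g)=\int\ell(g(x),y)\,b(x,y)\,\nu_Y(dx,dy)$ with $b(x,y)=\bigl(f_y*\tfrac1\lambda\mathcal{K}(\cdot/\lambda)-f_y\bigr)(x)$. Under \textbf{(R1)} the classical kernel-bias bound over Hölder classes (see \cite{booktsybakov}) gives $\|b\|_{L_2(\nu_Y)}\leq C\sum_i\lambda_i^\gamma$, uniformly over $\mathcal{H}(\gamma,L)$. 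Then Cauchy--Schwarz in $L_2(\nu_Y)$, the Bernstein property, and Young's inequality with exponents $2\kappa$ and $2\kappa/(2\kappa-1)$ give $(R_\ell-R_\ell^\lambda)(g-g^*)\leq\|\ell(g)-\ell(g^*)\|_{L_2(\nu_Y)}\|b\|_{L_2(\nu_Y)}\leq\tfrac12(R_\ell(g)-R_\ell(g^*))+C_\kappa(\sum_i\lambda_i^\gamma)^{2\kappa/(2\kappa-1)}$, i.e.\ Definition~\ref{app} holds with $r=\tfrac12$ and $a(\lambda)=C_\kappa(\sum_i\lambda_i^\gamma)^{2\kappa/(2\kappa-1)}\asymp\lambda^{2\kappa\gamma/(2\kappa-1)}$.

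Finally, with $c(\lambda)\asymp\lambda^{-\bar\beta}$ and $a(\lambda)\asymp\lambda^{2\kappa\gamma/(2\kappa-1)}$, the first inequality of \eqref{biascontrol} amounts to $\lambda^{2\kappa\gamma/(2\kappa-1)}\lesssim(\lambda^{-\bar\beta}/\sqrt n)^{2\kappa/(2\kappa+\rho-1)}$; balancing the two sides and solving for $\lambda$ returns precisely \eqref{deconvchoice}. Theorem~\ref{mainresult} then applies and gives $\E R_\ell(\hat g)-R_\ell(g^*)\leq C(c(\lambda)/\sqrt n)^{2\kappa/(2\kappa+\rho-1)}$, and substituting $c(\lambda)\asymp\lambda^{-\bar\beta}$ with $\lambda$ as in \eqref{deconvchoice} yields the announced rate $n^{-\kappa\gamma/(\gamma(2\kappa+\rho-1)+(2\kappa-1)\bar\beta)}$; since all constants depend only on $\eta,\mathcal{K},\kappa,\kappa_0,\rho,\gamma,L$ and $\min_y p(y)$, the bound is uniform over $\mathcal{H}(\gamma,L)$. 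I expect the real work to be Step~1: pinning down the exact powers $\prod_i\lambda_i^{-\beta_i}$ and $\prod_i\lambda_i^{-\beta_i-1/2}$ in the two Fourier estimates on $\tfrac1\lambda\mathcal{K}_\eta(\cdot/\lambda)$ under \textbf{(A1)} (handling in particular the low-frequency region where $\mathcal{F}[\eta]$ is only bounded away from $0$), together with making the multivariate kernel-bias bound over the isotropic Hölder class genuinely uniform; once these are in hand the remaining steps are exponent bookkeeping.
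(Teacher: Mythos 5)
Your proposal is correct and follows essentially the same route as the paper: the paper's Lemma \ref{liphold} establishes $c(\lambda)=\prod_i\lambda_i^{-\beta_i}$ and $K(\lambda)=\prod_i\lambda_i^{-\beta_i-1/2}$ by exactly the Plancherel/compact-support argument you describe, Lemma \ref{biashold} derives the approximation function from the identity $\E\mathcal{K}_\eta(\frac{Z-x}{\lambda})=\E\mathcal{K}(\frac{X-x}{\lambda})$ plus the H\"older/Taylor bias bound, Cauchy--Schwarz, the Bernstein property and Young's inequality, and the rate then follows by balancing in \eqref{biascontrol} via Theorem \ref{mainresult}. The only cosmetic differences are that the paper bounds the kernel bias pointwise before integrating (rather than in $L_2(\nu_Y)$) and applies Young's inequality so as to get residual constant $r=1/(2\kappa)$ and $a(\lambda)=C\sum_i\lambda_i^{2\kappa\gamma/(2\kappa-1)}$ instead of your $r=1/2$ and $(\sum_i\lambda_i^{\gamma})^{2\kappa/(2\kappa-1)}$, which coincide in order at the common bandwidth \eqref{deconvchoice}.
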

The proof of this result is postponed to Section 6. Here follows some remarks.\\

Rates in Theorem \ref{deconv} generalize the result of \cite{kolt} (see also \cite{tsybakov2004}) to the errors-in-variables case. Point out that if $\bar{\beta}=0$, we get the rates of the direct case. Here, the price to pay for the inverse problem of deconvolution can be quantified as $\frac{(2\kappa-1)\bar{\beta}}{\gamma},
$ where $\kappa>1$.
Hence, the performances of the method depend on the behavior of the characteristic function of the noise distribution. In pattern recognition, it is important to notice that the influence of the errors in variables is related to both parameters $\kappa$ and $\gamma$. Same phenomenon also occurs in \cite{pinkfloyds}.\\ 
It is also interesting to study the minimax optimality of the result of Theorem \ref{deconv} using the lower bounds presented in \cite{pinkfloyds}. For this purpose, let us introduce a random couple $(X,Y)$ with law $P$ on $\mathcal{X}\times\{0,1\}$. Given $\mathcal{G}$ and the class of associated candidates $\{g(x)=\ind_G(x),\,G\in\mathcal{G}\}$, we consider the hard loss $\ell_H(g(x),y)=|y-\ind_G(x)|$. In this case, the Bayes risk is defined as:
$$
R_H(G)=\E|Y-\ind_G(X)|.
$$
It is easy to see that for $y\in\{0,1\}$ and $g(x)=\ind_G(x)$, we have:
$$
|\ell_H(g(x),y)-\ell_H(g'(x),y)|=\left||y-\ind_G(x)|-|y-\ind_{G'}(x)|\right|=|\ind_G(x)-\ind_{G'}(x)|.
$$
Gathering with the margin assumption, Lemma 2 in \cite{mammen} allows us to write:
\beqnn
\Arrowvert \ell_H(g)-\ell_H(g')\Arrowvert^2_{L_2(\nu_Y)}=\Arrowvert \ind_G-\ind_{G'}\Arrowvert^2_{L_2(\R^d)}&=&d_\Delta(G,G')\\ 
&\leq &\frac{c_0}{2}\left(R_H(g)-R_H(g')\right)^{\frac{\alpha}{\alpha+1}}.
\eeqnn
As a result, provided that $G^*\in\mathcal{G}$ and under the margin assumption, the excess loss class $\{\ell_H(g)-\ell_H(g^*)\}$ is Bernstein with respect to $\mu=\nu_Y$ with parameter $\kappa=\frac{\alpha+1}{\alpha}$. \\
To apply Theorem \ref{deconv}, we need to check \textbf{(L$_\mu$)} and \textbf{(B)} from Definition \ref{LB}. Remark that from Lemma 3 in \cite{pinkfloyds}, we have:
$$
\no l_\lambda(g)-l_\lambda(g')\no^2_{L_2(\tilde{P})}\leq C\Pi_{i=1}^d \lambda_i^{-\beta_i}d_\Delta(G,G'),
$$
where for any  $g=\ind_G$:
$$
\ell_\lambda(g,z,y)=\int \ell_H(g(x),y)\frac{1}{\lambda}\mathcal{K}_\eta\left(\frac{z-x}{\lambda}\right)dx.
$$
Consequently, $\{l_\lambda(g),g=\ind_G:G\in\mathcal{G}\}$ is a LB-class with respect to $\nu_Y$ with constants $c(\lambda)$ and $K(\lambda)$ given by:
$$
c(\lambda)=\Pi_{i=1}^d\lambda_i^{-\beta_i} \mbox{ and } K(\lambda)=\Pi_{i=1}^d\lambda_i^{-\beta_i-1/2}.
$$
The last step is to control the complexity parameter $\tilde{\omega}_n(\mathcal{G},\delta,\nu_Y)$ as a function of $\delta$. With Lemma 5.1 in \cite{AT}, a control of the $L_2(\nu_Y)$-entropy with bracketing of the class $\{\ind_G,\,G\in\mathcal{G}\}$ is given by:
$$
\log\mathcal{N}(\{\ind_G,\,G\in\mathcal{G}\},L_2(\nu_Y),\epsilon)\leq c\epsilon^{-\frac{d}{\gamma\alpha}},
$$
under a plug-in type regularity assumption such as \textbf{(R1)}. As a result, we can apply Lemma \ref{dudleynoisy} in Section 4 to get a control of the desired modulus of continuity as follows:
$$
\tilde{\omega}_n(\GG,\delta,\nu_Y)\leq C_1 \frac{c(\lambda)}{\sqrt{n}}\delta^{1-\frac{d}{\gamma\alpha}},
$$
for some $C_1>0$.\\
Finally, using Lemma 4 in Section 6, in the particular case of the hard loss, $\{\ell_\lambda(g),g\in\mathcal{G}\}$ has approximation power $a(\lambda)$ with constant $0<r<1$ given by:
$$
a(\lambda)=\sum_{i=1}^d\lambda_i^{\frac{\kappa}{\kappa-1}\gamma}\mbox{ and }r=\frac{1}{\kappa}.
$$
In this case, Theorem \ref{deconv} leads to:
\beqnn 
\E R_{H}(\hat{g}_n^\lambda)-R_{H}(g^*)\leq C n^{-\frac{(\alpha+1)\gamma}{\gamma(\alpha+2)+d+2\bar{\beta}}}.
\eeqnn
This rate corresponds to the minimax rates of classification with errors in variables stated in \cite{pinkfloyds}. It ensures the minimax optimality of the method in the errors-in-variables case for this particular loss. An open problem is to give a lower bound for more general losses.
\subsection{General case with singular values decomposition} 
In this section, we observe a training set $(Z_i,Y_i),\,i=1,\ldots, n$ where
$Z_i$ are i.i.d. with law $Af$, where $A:L_2(\mathcal{X})\to L_2(\tilde{\mathcal{X}})$ is a known linear compact operator. For simplicity, we also restrict ourselves to moderately ill-posed inverse problem considering the singular values decomposition of $A$. Since $A$ is compact, $A^*A$ is auto-adjoint and compact. We can find an orthonormal basis of eigenfunctions of $A^*A$, denoted by $(\phi_k)_{k\in\N^*}$. We obtain $A^*A\phi_k=b_k^2\phi_k$, with $(b_k)_{k\in\N^*}$ the decreasing sequence of singular values. Considering the image basis $\psi_k=A\phi_k/b_k$, we have the following SVD (singular values decomposition):
\beqn
\label{svd}
A\phi_k=b_k\psi_k\mbox{ and }A^*\psi_k=b_k\phi_k,\,k\in\N^*.
\eeqn
In the sequel, we make the following assumption:\\
\noindent
\textbf{(A2)} \textit{There exists $\beta\in \R_+$ such that:
$$ b_k \sim k^{-\beta} \mathrm{as} \ k\to +\infty.$$}
In this case, the rate of decrease of the singular values is polynomial. As an example, we can consider the convolution operator above and from an easy calculation, the spectral domain is the Fourier domain and \textbf{(A2)} is comparable to \textbf{(A1)}. However assumption \textbf{(A2)} can deal with any linear inverse problem and is rather standard in the statistical inverse problem literature (see \cite{cavaliersurvey}).

In this framework,  we also need the following assumption on the regularity of the conditional densities into the basis of the operator $A$:\\

\noindent
\textbf{(R2)} \textit{For any $y\in\mathcal{Y}$, $f_y\in\mathcal{P}(\gamma,L)$ where:
\beqnn
\mathcal{P}(\gamma,L)&=&\{f\in\Theta(\gamma,L):f \mbox{ are bounded probability densities w.r.t. Lebesgue }\},
\eeqnn
and $\Theta(\gamma,L)$ is the ellipso\"id in the SVD basis defined as:
\beqnn
\Theta(\gamma,L)=\{f(x)=\sum_{k\geq 1}\theta_k\phi_k(x):\sum_{k\geq 1}\theta_k^2k^{2\gamma}\leq L\}.
\eeqnn
}
Considering the SVD \eqref{svd}, we propose to replace in the true risk the conditional densities $f_y$ by a family of projection estimators given by:
\beqn
\label{projection}
\hat{f}_y(x)=\sum_{k= 1}^N\hat{\theta}_k^y\phi_k(x),
\eeqn
where $\hat{\theta}_k^y$ is an unbiased estimator of $\theta_k^y=\int f_y\phi_kd\nu$ given by:
\beqn
\hat{\theta}_k^y=\frac{1}{n_y}\sum_{i=1}^{n_y}b_k^{-1}\phi_k(Z_i).
\eeqn
In this case, assumption \eqref{est} is satisfied with $k_N(z,x)=\sum_{k=1}^Nb_k^{-1}\phi_k(z)\phi_k(x)$. It gives the following expression of the empirical risk:
\beqnn
R_n^N(g)=\frac{1}{n}\sum_{i=1}^n\ell_N(g,Z_i,Y_i),
\eeqnn
where:
$$
\ell_N(g,z,y)=\sum_{k=1}^{N}b_k^{-1}\int_\mathcal{X}\phi_k(x)\ell(g(x),y)\nu(dx)\phi_k(z).
$$
Next theorem states the rates of convergence for the ERM estimator $\hat{g}_n^N$ defined as:
\beqnn
\hat{g}_n^N=\arg\min_{g\in\mathcal{G}}\frac{1}{n}\sum_{i=1}^n\ell_N(g,Z_i,Y_i).
\eeqnn
\begin{thm}
\label{svdtheo}
Suppose $\{\ell(g)-\ell(g^*),g\in\mathcal{G}\}$ is Bernstein class with respect to $\nu_Y$ with parameter $\kappa\geq 1$ such that $\ell(g(\cdot),y)\in L_2(\nu)$, for any $y\in\mathcal{Y}$. Suppose $0<\rho<1$ exists such that:
\beqnn
\tilde{\omega}_n(\GG,\delta,\nu_Y)\leq C_1\frac{c(N)}{\sqrt{n}}\delta^{1-\rho},\,\forall 0<\delta<1,
\eeqnn
for some $C_1>0$.
Then under \textbf{(A2)} and \textbf{(R2)}, $\hat{g}_n^N$ satisfies, for $n$ great enough:
\beqnn 
\sup_{f_y\in\mathcal{P}(\gamma,l)}\E R_\ell(\hat{g}_n^N)-R_\ell(g^*)\leq C n^{-\frac{\kappa\gamma}{\gamma(2\kappa+\rho-1)+(2\kappa-1)\beta}},
\eeqnn
where we choose $N$ such that:
$$
N= n^{\frac{2\kappa-1}{2\gamma(2\kappa+\rho-1)+2(2\kappa-1)\beta}}.
$$
\end{thm}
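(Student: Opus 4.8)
The plan is to deduce Theorem~\ref{svdtheo} from the general bound of Theorem~\ref{mainresult}, used with $\mu=\nu_Y$ and with the smoothing parameter $\lambda$ replaced by the truncation level $N$. Here the kernel of \eqref{est} is $k_N(z,x)=\sum_{k=1}^N b_k^{-1}\phi_k(z)\phi_k(x)$, so $\ell_N(g,z,y)=\sum_{k=1}^N b_k^{-1}\langle\phi_k,\ell(g(\cdot),y)\rangle_\nu\,\phi_k(z)$, and the Bernstein condition and the modulus bound $\tilde{\omega}_n(\GG,\delta,\nu_Y)\leq C_1 c(N)n^{-1/2}\delta^{1-\rho}$ needed to invoke Theorem~\ref{mainresult} are precisely what is assumed in the statement. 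So the work reduces to: (i) identifying the Lipschitz and boundedness constants $c(N),K(N)$ of the LB-class $\{\ell_N(g),g\in\GG\}$; (ii) producing an approximation function $a(N)$ from \textbf{(R2)}; and (iii) checking condition \eqref{biascontrol} for the announced $N$.

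For (i) I would expand $\ell_N(g)-\ell_N(g')$ in the basis $(\phi_k)$. Writing $c_k^y=\langle\phi_k,\ell(g(\cdot),y)-\ell(g'(\cdot),y)\rangle_\nu$ and using that $Z\mid Y=y$ has the bounded density $Af_y$ on $K$, together with the orthonormality of $(\phi_k)$ in $L_2(\nu)$ and the monotonicity $b_k^{-2}\leq b_N^{-2}$ for $k\leq N$, one gets $\no\ell_N(g)-\ell_N(g')\no^2_{L_2(\tilde{P})}\lesssim b_N^{-2}\sum_{y}p(y)\sum_{k\leq N}(c_k^y)^2\leq b_N^{-2}\no\ell(g)-\ell(g')\no^2_{L_2(\nu_Y)}$, i.e.\ \textbf{(L$_{\nu_Y}$)} holds with $c(N)\asymp b_N^{-1}\asymp N^{\beta}$ by \textbf{(A2)}. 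For \textbf{(B)}, Cauchy--Schwarz gives $|\ell_N(g,z,y)|\leq(\sum_{k\leq N}b_k^{-2}\phi_k(z)^2)^{1/2}\no\ell(g(\cdot),y)\no_{L_2(\nu)}$; since $\ell$ is bounded by $1$ and supported on the compact $K$, and $(\phi_k)$ is uniformly bounded, this is $\lesssim(\sum_{k\leq N}b_k^{-2})^{1/2}\asymp N^{\beta+1/2}$, so $K(N)\asymp N^{\beta+1/2}$.

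For (ii) I would use that $\hat{\theta}_k^y$ is unbiased for $\theta_k^y$, hence $\E\hat{f}_y=\sum_{k\leq N}\theta_k^y\phi_k$, so by the ellipsoid constraint \textbf{(R2)}, $\no f_y-\E\hat{f}_y\no^2_{L_2(\nu)}=\sum_{k>N}(\theta_k^y)^2\leq N^{-2\gamma}\sum_{k>N}(\theta_k^y)^2k^{2\gamma}\leq LN^{-2\gamma}$. Writing the bias of the risk as $(R_\ell-R_\ell^N)(g-g^*)=\sum_y p(y)\int(\ell(g(x),y)-\ell(g^*(x),y))(f_y-\E\hat{f}_y)(x)\nu(dx)$ and applying Cauchy--Schwarz in $L_2(\nu_Y)$ bounds it by $\sqrt{L}\,N^{-\gamma}\no\ell(g)-\ell(g^*)\no_{L_2(\nu_Y)}$; inserting the Bernstein inequality $\no\ell(g)-\ell(g^*)\no_{L_2(\nu_Y)}\leq\sqrt{\kappa_0}(R_\ell(g)-R_\ell(g^*))^{1/(2\kappa)}$ and splitting the resulting product with a Young inequality (exponents $\frac{2\kappa}{2\kappa-1}$ and $2\kappa$) yields Definition~\ref{app} with $a(N)\asymp N^{-\frac{2\kappa\gamma}{2\kappa-1}}$ and some residual constant $r<1$.

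Finally, for (iii), Theorem~\ref{mainresult} has rate $(c(N)/\sqrt{n})^{2\kappa/(2\kappa+\rho-1)}=(N^{\beta}/\sqrt{n})^{2\kappa/(2\kappa+\rho-1)}$. The first part of \eqref{biascontrol}, $a(N)\lesssim(N^{\beta}/\sqrt{n})^{2\kappa/(2\kappa+\rho-1)}$, becomes an equality up to constants exactly when $N=n^{\frac{2\kappa-1}{2\gamma(2\kappa+\rho-1)+2(2\kappa-1)\beta}}$ --- the announced choice --- and at this $N$ both sides equal $n^{-\frac{\kappa\gamma}{\gamma(2\kappa+\rho-1)+(2\kappa-1)\beta}}$, which is the claimed rate. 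For the second part of \eqref{biascontrol} I would use the identity $c(N)^{2\kappa/(2\kappa+\rho-1)}n^{(\kappa+\rho-1)/(2\kappa+\rho-1)}=n\cdot(c(N)/\sqrt{n})^{2\kappa/(2\kappa+\rho-1)}$, so that the condition reads $K(N)(1+\log n)\leq n^{\,1-\kappa\gamma/[\gamma(2\kappa+\rho-1)+(2\kappa-1)\beta]}$, which holds for $n$ large since $K(N)\asymp N^{\beta+1/2}$ is a fixed power of $n$ and the exponent on the right is strictly positive. Applying Theorem~\ref{mainresult} then gives $\sup_{f_y\in\mathcal{P}(\gamma,L)}\E R_\ell(\hat{g}_n^N)-R_\ell(g^*)\leq C(c(N)/\sqrt{n})^{2\kappa/(2\kappa+\rho-1)}\leq Cn^{-\frac{\kappa\gamma}{\gamma(2\kappa+\rho-1)+(2\kappa-1)\beta}}$, as claimed. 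The step I expect to be the main obstacle is (i): extracting the \emph{sharp} Lipschitz constant $c(N)\asymp N^{\beta}$ requires simultaneously controlling the change of measure from $\tilde{P}$ to $\nu_Y$ (boundedness of $Af_y$ on $K$), the SVD expansion, and the polynomial eigenvalue decay \textbf{(A2)}, and \textbf{(B)} needs a uniform bound on $\no\phi_k\no_\infty$; once $c(N),K(N),a(N)$ are pinned down the remainder is the same computation as in the proof of Theorem~\ref{deconv}.
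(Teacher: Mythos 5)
Your proposal is correct and follows essentially the same route as the paper: it reproduces Lemma \ref{lipsvd} (the LB-class constants $c(N)\asymp N^{\beta}$ and $K(N)\asymp N^{\beta+1/2}$ via the SVD expansion, orthonormality of $(\phi_k)$ and boundedness of the densities) and Lemma \ref{biassvd} (the approximation function $a(N)\asymp N^{-2\kappa\gamma/(2\kappa-1)}$ via the ellipsoid condition, Cauchy--Schwarz, the Bernstein hypothesis and Young's inequality), then balances $a(N)$ against $\left(c(N)/\sqrt{n}\right)^{2\kappa/(2\kappa+\rho-1)}$ exactly as in the paper's proof of Theorem \ref{svdtheo}. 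The only difference is cosmetic: you explicitly address the second part of \eqref{biascontrol}, which the paper leaves implicit, though your justification that it holds because $K(N)$ is a fixed power of $n$ still requires comparing that power with the strictly positive exponent on the right-hand side.
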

Theorem \ref{svdtheo} shows that in pattern recognition with indirect observations, we can deal with any linear compact operator $A$ using the SVD. From this point of view, this result could be compared with \cite{klemela} where white noise model is considered.\\
Rates of convergence in Theorem \ref{svdtheo} are comparable with Theorem 2. If $A$ is a convolution operator, the result above shows that $\hat{g}_n^N$ using projection estimators in the SVD reaches the rate of Theorem \ref{deconv} using kernel deconvolution estimators. In this case, the regularity assumption deals with ellipsoids in the SVD domain instead of H\"older classes. However, we can conjecture that this result is also minimax, although a rigorous lower bound has to be managed.\\
 Finally, this result might be extended to other linear regularization methods without significant change. Here, we present the result for projections into the SVD domain for the sake of simplicity in the proofs but Tikhonov and Landweber regularization could be considered for instance.
\subsection{Restriction to a compact $K$}
\label{appendix}
In this subsection, we develop an alternative to Theorem \ref{deconv}-\ref{svdtheo} to deal with a weaker Bernstein assumption. For the sake of simplicity, we restrict ourselves in Theorem \ref{deconv}-\ref{svdtheo} to Bernstein class with respect to measure $\nu_Y=\nu\otimes P_Y$ (see Definition \ref{KKb}). In this case, it is sufficient to deal with LB-class with respect to $\nu_Y$ in Definition \ref{LB}, thanks to \eqref{serie}. However, Bernstein classes with respect to $\nu_Y$ appear only in particular case, such as classification with hard loss in the context of \cite{mammen,tsybakov2004} (see Section 3.1). Here, we present a corollary of Theorem \ref{deconv}-\ref{svdtheo}. It allows us to deal with Bernstein classes in the spirit of \cite{empimini}, namely such that:
$$
\E_Pf^2\leq \kappa_0\left(\E_Pf\right)^{1/\kappa},\,\forall f\in\mathcal{F}=\{\ell(g)-\ell(g^*),g\in\mathcal{G}\}.
$$
\\
The idea is to restrict the study to a set $K\subseteq\R^d$ where $f\geq c_0>0$ over $K$. For this purpose, we can consider a set $\mathcal{G}$ of classifiers $g$ such that $\{x\in\mathcal{X}:f(x)>0\}\subset K$. We can also introduce the following loss:
\beqn
\label{restrictloss}
\ell_{\lambda,K}(g,z,y)=\int_Kk_\lambda(z,x)\ell(g(x),y)\nu(dx).
\eeqn
It means that we deal with the minimization of a true risk of the form:
$$
R_{\ell,K}(g)=\sum_{y\in\mathcal{Y}} p(y)\int_K \ell(g(x),y)f_y(x)dx.
$$
With \eqref{restrictloss}, it is straightforward to get \textbf{(L$_\mu$)} with $\mu=P$ since if $f\geq c_0>0$ on $K$, one gets:
\beqnn
\sum_{y\in\mathcal{Y}}p_y\int_K (\ell(g(x),y)-\ell(g'(x),y))^2\nu(dx)\leq\frac{1}{c_0}\no \ell(g)-\ell(g')\no_{L_2(P)}.
\eeqnn
Roughly speaking, Assumption \textbf{(L$_\mu$)} in Definition \ref{LB} whith $\mu=P$ provides a control of the variance of $\ell_\lambda(g,(Z,Y))$ by the variance of $\ell(g(X),Y)$. To have a control of the $L_2(\tilde{P})$-norm with respect to the $L_2(P)$-norm, we need to restrict the problem to $\{x:f(x)>0\}$. Otherwise, the variance of $\ell_\lambda(g,(Z,Y))$ cannot be compared with the variance of $\ell(g(X),Y)$. 

The following corollary points out the same performances for the $\lambda$-ERM over $K$ defined as:
$$
\hat{g}_n^{\lambda,K}=\arg\min_{g\in\mathcal{G}}\sum_{i=1}^n\ell_{\lambda,K}(g,Z_i,Y_i).
$$
\begin{cor}
Suppose $\{\ell(g)-\ell(g^*),g\in\mathcal{G}\}$ is a Bernstein class with respect to $P$ with parameter $\kappa>1$ and $\ell(g(\cdot),y)\in L_2(\nu)$, for any $y\in\mathcal{Y}$. Suppose $0<\rho<1$ exists such that: 
\beqnn
\tilde{\omega}_n(\GG,\delta,P)\leq C_1 \frac{c(\lambda)}{\sqrt{n}}\delta^{1-\rho},\,\forall 0<\delta<1.
\eeqnn
\begin{enumerate}
\item
Under \textbf{(A1)} and \textbf{(R1)}, $\hat{g}_n^{\lambda,K}$  satisfies, for $n$ great enough:
\beqnn 
\sup_{f_y\in\mathcal{H}(\gamma,l)}\E R_{\ell,K}(\hat{g}_n^{\lambda,K})-R_{\ell,K}(g^*)\leq C n^{-\frac{\kappa\gamma}{\gamma(2\kappa+\rho-1)+(2\kappa-1)\bar{\beta}}},
\eeqnn
where $\bar{\beta}=\sum_{i=1}^d\beta_i$ and for a choice of $ \lambda=(\lambda_1,\dots,\lambda_d)$ given by:
\beqn
\label{deconvchoice}
\forall i\in\{1,\ldots,d\}, \,\lambda_i= n^{-\frac{2\kappa-1}{2\gamma(2\kappa+\rho-1)+2(2\kappa-1)\bar{\beta}}}.
\eeqn
\item Under  \textbf{(A2)} and \textbf{(R2)}, $\hat{g}_n^{N,K}$ satisfies, for $n$ great enough:
\beqnn 
\sup_{f_y\in\mathcal{P}(\gamma,L)}\E R_{\ell,K}(\hat{g}_n^{N,K})-R_{\ell,K}(g^*)\leq C n^{-\frac{\kappa\gamma}{\gamma(2\kappa+\rho-1)+(2\kappa-1)\beta}}),
\eeqnn
where we choose $N$ such that:
$$
N=n^{\frac{2\kappa-1}{2\gamma(2\kappa+\rho-1)+2(2\kappa-1)\beta}}.
$$
\end{enumerate}
\end{cor}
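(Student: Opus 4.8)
The corollary is a direct consequence of Theorems \ref{deconv} and \ref{svdtheo}, with the only modification being that the Bernstein assumption is now stated with respect to $P$ rather than $\nu_Y$, and that we work with the restricted loss $\ell_{\lambda,K}$ and restricted risk $R_{\ell,K}$. The plan is therefore to show that the two theorems apply \emph{verbatim} once we check that the LB-class structure (Definition \ref{LB}) carries over to $\mu=P$ with the \emph{same} Lipschitz and boundedness constants $c(\lambda),K(\lambda)$ (resp. $c(N),K(N)$) as in the $\nu_Y$ case. First I would verify the key series of inequalities \eqref{serie}: on the compact $K$ the discussion preceding the corollary shows $\|\ell(g)-\ell(g')\|_{L_2(\nu_Y|_K)}\le c_0^{-1/2}\|\ell(g)-\ell(g')\|_{L_2(P)}$, so that $\|\ell_{\lambda,K}(g)-\ell_{\lambda,K}(g^*)\|_{L_2(\tilde P)}\le c(\lambda)\|\ell(g)-\ell(g^*)\|_{L_2(\nu_Y|_K)}\le c(\lambda)c_0^{-1/2}\big(\E_P[\ell(g)-\ell(g^*)]\big)^{1/2\kappa}$, absorbing the constant $c_0^{-1/2}$ into $\kappa_0$. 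This is exactly the hypothesis used to control the variance term in the Inverse Vapnik bound \eqref{nvb}, so the empirical-process part of the argument of Theorem \ref{mainresult} runs unchanged.

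Second, I would observe that the bias analysis is entirely unaffected by the switch to $K$: by construction of $\mathcal{G}$ we have $\{x:f(x)>0\}\subset K$, hence $R_{\ell,K}(g)=R_\ell(g)$ for every $g\in\mathcal{G}$ and in particular the approximation-function bounds (from Lemma 4 in Section 6, used in the proofs of Theorems \ref{deconv} and \ref{svdtheo}) give the same $a(\lambda)$ (resp. $a(N)$) and the same residual constant $r<1$. The complexity condition is assumed directly in the corollary with $\mu=P$, namely $\tilde\omega_n(\GG,\delta,P)\le C_1 c(\lambda)n^{-1/2}\delta^{1-\rho}$, which is precisely what \eqref{modulus} requires; and since the entropy / regularity assumptions \textbf{(R1)}, \textbf{(R2)} and the operator assumptions \textbf{(A1)}, \textbf{(A2)} are imposed without change, the constants $c(\lambda)=\Pi_i\lambda_i^{-\beta_i}$, $K(\lambda)=\Pi_i\lambda_i^{-\beta_i-1/2}$ (resp. $c(N)=b_N^{-1}$, $K(N)$ from Section 3.2) and the verification of the technical second inequality in \eqref{biascontrol} are identical to those already carried out in Sections 3.1--3.2.

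Third, with all hypotheses of Theorems \ref{deconv} and \ref{svdtheo} now verified for $\hat g_n^{\lambda,K}$ and $\hat g_n^{N,K}$, I would simply invoke those theorems (applied with the excess-loss class that is Bernstein with parameter $\kappa$ with respect to the new $\mu$) to obtain
\beqnn
\sup_{f_y}\E R_{\ell,K}(\hat g_n^{\lambda,K})-R_{\ell,K}(g^*)\le C\Big(\frac{c(\lambda)}{\sqrt n}\Big)^{\frac{2\kappa}{2\kappa+\rho-1}},
\eeqnn
and then plug in the stated choices of $\lambda$ (resp. $N$), which balance $a(\lambda)$ against $(c(\lambda)/\sqrt n)^{2\kappa/(2\kappa+\rho-1)}$ exactly as in the proofs of Theorems \ref{deconv} and \ref{svdtheo}, to get the announced rates $n^{-\kappa\gamma/(\gamma(2\kappa+\rho-1)+(2\kappa-1)\bar\beta)}$ and $n^{-\kappa\gamma/(\gamma(2\kappa+\rho-1)+(2\kappa-1)\beta)}$.

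\textbf{Main obstacle.} The only genuinely new point — and the one I would be most careful about — is the passage from the $L_2(\tilde P)$-norm of $\ell_{\lambda,K}(g)-\ell_{\lambda,K}(g^*)$ to the $L_2(P)$-norm of the excess loss, i.e. establishing \textbf{(L$_\mu$)} with $\mu=P$: this requires both that the classifiers in $\mathcal{G}$ have support of positivity of $f$ contained in $K$ \emph{and} the lower bound $f\ge c_0>0$ on $K$, and one must check that these two structural restrictions on $(\mathcal{G},K)$ are compatible with the regularity classes $\mathcal{H}(\gamma,L)$, $\mathcal{P}(\gamma,L)$ appearing in \textbf{(R1)}--\textbf{(R2)} (a density cannot be bounded below by a positive constant on all of $\R^d$, which is why the restriction to a compact $K$ is essential). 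Everything else is a bookkeeping exercise of re-reading the proofs of Theorems \ref{deconv} and \ref{svdtheo} with $\nu_Y$ replaced by $P$ and $\ell_\lambda$ replaced by $\ell_{\lambda,K}$.
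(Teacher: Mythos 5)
Your proposal is correct and follows exactly the route the paper intends: the paper gives no separate proof of this corollary, but the discussion in Section 3.3 preceding it establishes precisely your key step (the bound $\Arrowvert \ell(g)-\ell(g')\Arrowvert_{L_2(\nu_Y|_K)}^2\leq c_0^{-1}\Arrowvert \ell(g)-\ell(g')\Arrowvert^2_{L_2(P)}$ from $f\geq c_0>0$ on $K$, giving \textbf{(L$_\mu$)} with $\mu=P$ at the cost of a constant absorbed into $\kappa_0$), after which Theorems 2 and 3 apply verbatim with the restricted loss. Your identification of the compactness restriction as the essential structural point (a density cannot be bounded below on all of $\R^d$) matches the paper's own motivation for introducing $K$.
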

This corollary allows to get the same fast rates of convergence of Theorem \ref{deconv}-\ref{svdtheo} under a weaker Bernstein assumption. The price to pay for the $\lambda$-ERM with restricted loss \eqref{restrictloss} relies on the dependence on $K$ of the estimation procedure. 
\section{Complexity from indirect observations}
\label{imc}
The main results of this paper rely on the control of the indirect modulus of continuity defined as:
$$
\tilde{\omega}_n(\GG,\delta,\mu)=\E\sup_{g,g'\in\mathcal{G}:\no \ell(g)-\ell(g')\no_{L_2(\mu)}\leq\delta}|\tilde{P}-\tilde{P}_n|(\ell_\lambda(g)-\ell_\lambda(g')).
$$
In this section, we intent to upper bound this quantity thanks to standard learning theory arguments. The first result links the control of $\tilde{\omega}_n(\mathcal{G},\delta,\mu)$ to the bracketing entropy of the loss class, which generalizes the result of the direct case (see \cite{wvdv}) when $A=Id$.
\begin{lemma}
\label{dudleynoisy}
Consider a LB-class $\{\ell_\lambda(g),g\in\GG\}$ with respect to $\mu$ with Lipschitz constant $c(\lambda)$.
Then, given some $0<\rho<1$, we have:
\beqnn
\mathcal{H}_{B}(\{\ell(g),\,g\in\mathcal{G}\},\epsilon,L_2(\mu))\leq c\epsilon^{-2\rho}\Rightarrow\tilde{\omega}_n(\GG,\delta,\mu)\leq C_1 \frac{c(\lambda)}{\sqrt{n}}\delta^{1-\rho},
\eeqnn
where $\mathcal{H}_{B}(\{\ell(g),\,g\in\mathcal{G}\},\epsilon,L_2(\mu))$ denotes the $\epsilon$-entropy with bracketing of the set $\{\ell(g),\,g\in\mathcal{G}\}$ with respect to $L_2(\mu)$ (see \cite{wvdv} for a definition).
\end{lemma}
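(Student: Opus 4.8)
The plan is to reduce the control of the indirect modulus $\tilde\omega_n(\GG,\delta,\mu)$ to a classical entropy-based maximal inequality (Dudley's chaining / the bound of van de Geer, \cite{wvdv}) applied to the \emph{transformed} class $\{\ell_\lambda(g)-\ell_\lambda(g'),\,g,g'\in\GG\}$, viewed as a class of functions of the indirect pair $(Z,Y)$ under the law $\tilde P$. The key observation, which is exactly where the LB-structure enters, is that the $L_2(\tilde P)$-geometry of the transformed class is dominated by the $L_2(\mu)$-geometry of the original loss class: by property \textbf{(L$_\mu$)} in Definition \ref{LB},
\beqnn
\no \ell_\lambda(g)-\ell_\lambda(g')\no_{L_2(\tilde P)}\le c(\lambda)\,\no \ell(g)-\ell(g')\no_{L_2(\mu)}.
\eeqnn
Hence, first I would observe that any $\epsilon/c(\lambda)$-bracket (or $\epsilon/c(\lambda)$-net) of $\{\ell(g),g\in\GG\}$ in $L_2(\mu)$ induces, after applying the linear operator $\ell(g)\mapsto\ell_\lambda(g)$, a corresponding $\epsilon$-cover of $\{\ell_\lambda(g),g\in\GG\}$ in $L_2(\tilde P)$; consequently the bracketing entropy of the transformed class satisfies
\beqnn
\mathcal{H}_B\big(\{\ell_\lambda(g),g\in\GG\},\epsilon,L_2(\tilde P)\big)\le \mathcal{H}_B\big(\{\ell(g),g\in\GG\},\epsilon/c(\lambda),L_2(\mu)\big)\le c\,\big(\epsilon/c(\lambda)\big)^{-2\rho}=c\,c(\lambda)^{2\rho}\epsilon^{-2\rho}.
\eeqnn

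Next I would apply the standard bracketing maximal inequality for empirical processes (e.g.\ Lemma 3.4.2 in \cite{wvdv}, or the version used in \cite{kolt}) to the empirical process $(\tilde P-\tilde P_n)$ indexed by the localized set $\{\ell_\lambda(g)-\ell_\lambda(g^*):\no\ell(g)-\ell(g^*)\no_{L_2(\mu)}\le\delta\}$. Because of the Lipschitz bound above, this localized set is contained in an $L_2(\tilde P)$-ball of radius $c(\lambda)\delta$; the maximal inequality then yields a bound of the form
\beqnn
\tilde\omega_n(\GG,\delta,\mu)\le \frac{C}{\sqrt n}\int_0^{c(\lambda)\delta}\sqrt{1+\mathcal{H}_B\big(\{\ell_\lambda(g)\},\epsilon,L_2(\tilde P)\big)}\,d\epsilon.
\eeqnn
Plugging in the entropy estimate $\mathcal{H}_B\le c\,c(\lambda)^{2\rho}\epsilon^{-2\rho}$ and using $\rho<1$ so that $\epsilon^{-\rho}$ is integrable at $0$, the entropy integral evaluates (up to constants) to $c(\lambda)^{\rho}\cdot (c(\lambda)\delta)^{1-\rho}=c(\lambda)\delta^{1-\rho}$, which after the $1/\sqrt n$ prefactor gives precisely $\tilde\omega_n(\GG,\delta,\mu)\le C_1\,c(\lambda)\,n^{-1/2}\,\delta^{1-\rho}$. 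A small technical point is to handle the additive ``$1$'' under the square root, which contributes a term of order $c(\lambda)\delta/\sqrt n$; for the regime of interest ($\delta$ not too small relative to $n$, or after truncation) this is absorbed into the $\delta^{1-\rho}$ term, exactly as in the direct case.

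The main obstacle is the bookkeeping of how the Lipschitz constant $c(\lambda)$ propagates through the chaining: one must be careful that the rescaling $\epsilon\mapsto\epsilon/c(\lambda)$ in the entropy bound and the rescaling $\delta\mapsto c(\lambda)\delta$ in the localization radius combine to give a \emph{linear} dependence on $c(\lambda)$ in the final bound (rather than $c(\lambda)^{1+\rho}$ or similar), which is what makes the resulting rate in Theorem \ref{mainresult} sharp. This requires tracking constants through the entropy integral rather than just invoking the direct-case lemma as a black box. A secondary point is that \textbf{(L$_\mu$)} is stated as a comparison of $L_2$ norms of \emph{differences} $\ell_\lambda(g)-\ell_\lambda(g')$, so the argument is naturally carried out at the level of the difference class, and one should verify that the bracketing numbers of $\{\ell(g),g\in\GG\}$ control those of the difference class $\{\ell(g)-\ell(g'):g,g'\in\GG\}$ up to the usual factor of $2$ in the bracket size — a routine fact, but one that should be noted explicitly.
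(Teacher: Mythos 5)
Your proposal follows essentially the same route as the paper: the paper also applies the bracketing maximal inequality of \cite{wvdv} (Theorem 2.14.2) to the localized difference class $\{\ell_\lambda(g)-\ell_\lambda(g')\}$, transfers the $L_2(\mu)$-bracketing entropy of $\{\ell(g)\}$ to the $L_2(\tilde P)$-entropy of the transformed class via the Lipschitz property at the cost of a factor $c(\lambda)^{2\rho}$, and evaluates the entropy integral using $\rho<1$ to obtain $C_1 c(\lambda)n^{-1/2}\delta^{1-\rho}$. The only cosmetic difference is that the paper works with the envelope-normalized form of the inequality and tunes the truncation level $\eta=c(\lambda)\delta/K(\lambda)^2$ to absorb the remainder term, where you instead integrate directly up to the localization radius $c(\lambda)\delta$; both yield the same bound.
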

With such a Lemma, it is possible to control the complexity in the indirect setup thanks to standard entropy conditions. The proof is presented in Section \ref{proofs}. It is based on a maximal inequality due to \cite{wvdv} applied to the class:
$$
\mathcal{F}_\lambda=\{\ell_\lambda(g)-\ell_\lambda(g'),g,g'\in\mathcal{G}:\no \ell (g)-\ell(g')\no_\mu\leq \delta\}.
$$

For instance, let us consider a loss $\ell$ such that $t\mapsto \ell(y,t)$ is a convex function, for any $y\in\mathcal{Y}$. Both least squares or large margin classification can be viewed as special cases of convex losses where $l(y,t)=(y-t)^2$ or $l(y,t)=\Phi(yt)$ respectively, with a given convex function $\Phi$ (such as $\Phi(u)=(1-u)_+$ for the hinge loss). In this case, using the convexity of the loss, it is straightforward to obtain with Lemma \ref{dudleynoisy} the following corollary.

\begin{cor}
\label{convex}
Suppose $\ell(\cdot,y)$ is convex for any $y\in\mathcal{Y}$, $\{\ell(g)-\ell(g^*),g\in\mathcal{G}\}$ is a Bernstein class with respect to $\mu$ with parameter $\kappa\geq 1$ and $\ell(g(\cdot),y)\in L_2(\nu)$, for any $y\in\mathcal{Y}$. Suppose $0<\rho<1$ exists such that:
\beqnn
\mathcal{H}_{B}(\mathcal{G},\epsilon,L_2(\mu))\leq c\epsilon^{-2\rho}, \,\forall 0<\epsilon<1.
\eeqnn
\begin{enumerate}
\item
Under \textbf{(A1)} and \textbf{(R1)}, the solution $\hat{g}_n^\lambda$ of the minimization \eqref{lerm} satisfies:
\beqnn 
\sup_{f_y\in\mathcal{H}(\gamma,l)}\E R_{\ell}(\hat{g}_n^\lambda)-R_{\ell}(g^*)\leq C n^{-\frac{\kappa\gamma}{\gamma(2\kappa+\rho-1)+(2\kappa-1)\bar{\beta}}},
\eeqnn
where $\bar{\beta}=\sum_{i=1}^d\beta_i$ and $ \lambda=(\lambda_1,\dots,\lambda_d)$ is given by:
\beqn
\label{deconvchoice}
\forall i\in\{1,\ldots,d\}, \,\lambda_i= n^{-\frac{2\kappa-1}{2\gamma(2\kappa+\rho-1)+2(2\kappa-1)\bar{\beta}}}.
\eeqn
\item Under \textbf{(A2)} and \textbf{(R2)}, $\hat{g}_n^N$ satisfies:
\beqnn 
\sup_{f_y\in\mathcal{P}(\gamma,L)}\E R_{\ell}(\hat{g}_n^N)-R_{\ell}(g^*)\leq C n^{-\frac{\kappa\gamma}{\gamma(2\kappa+\rho-1)+(2\kappa-1)\beta}},
\eeqnn
where we choose $N$ such that:
$$
N = n^{\frac{2\kappa-1}{2\gamma(2\kappa+\rho-1)+2(2\kappa-1)\beta}}.
$$
\end{enumerate}
\end{cor}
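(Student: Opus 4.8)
\medskip
\noindent\textit{Proof plan.}
The strategy is to recognise Corollary~\ref{convex} as a direct specialisation of earlier results: Theorem~\ref{deconv} (part~1) and Theorem~\ref{svdtheo} (part~2) when $\mu=\nu_Y$, and the corollary of Section~\ref{appendix} when $\mu=P$. Each of those already delivers exactly the announced rate, with exactly the same tuning of $\lambda$ (resp.\ $N$). What they require that Corollary~\ref{convex} does not state directly is the bound on the indirect modulus of continuity, $\tilde\omega_n(\GG,\delta,\mu)\le C_1\,c(\lambda)\,n^{-1/2}\,\delta^{1-\rho}$. So the whole proof reduces to deriving this bound from the bracketing entropy hypothesis $\mathcal{H}_B(\mathcal{G},\epsilon,L_2(\mu))\le c\,\epsilon^{-2\rho}$ assumed here, and then quoting the relevant result.

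First I would transfer the entropy estimate from $\mathcal{G}$ to the loss class $\{\ell(g),\,g\in\GG\}$. Since $t\mapsto\ell(t,y)$ is convex, it is Lipschitz on every compact subinterval of the interior of its domain; as the functions of $\GG$ all take values in one fixed bounded set, there is a constant $L=L(\ell,\GG)$ with $|\ell(g(x),y)-\ell(g'(x),y)|\le L\,|g(x)-g'(x)|$ for every $x,y,g,g'$. An $(\epsilon/L)$-bracket of $\mathcal{G}$ in $L_2(\mu)$ therefore yields an $\epsilon$-bracket of $\{\ell(g)\}$ in $L_2(\mu)$, so $\mathcal{H}_B(\{\ell(g),\,g\in\GG\},\epsilon,L_2(\mu))\le c\,L^{2\rho}\,\epsilon^{-2\rho}$, with the exponent $2\rho$ unchanged. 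Lemma~\ref{dudleynoisy}, applied with the LB-Lipschitz constant $c(\lambda)$ already identified inside the proofs of Theorems~\ref{deconv}--\ref{svdtheo}, then gives $\tilde\omega_n(\GG,\delta,\mu)\le C_1\,c(\lambda)\,n^{-1/2}\,\delta^{1-\rho}$ for all $0<\delta<1$.

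The conclusion is then immediate. For $\mu=\nu_Y$: under \textbf{(A1)}--\textbf{(R1)} all hypotheses of Theorem~\ref{deconv} now hold (Bernstein w.r.t.\ $\nu_Y$, $\ell(g(\cdot),y)\in L_2(\nu)$, and the modulus bound just obtained), giving part~1 with the stated $\lambda$; Theorem~\ref{svdtheo} gives part~2 with the stated $N$. For $\mu=P$ one proceeds identically, but using the restricted loss \eqref{restrictloss} and the corollary of Section~\ref{appendix}. Since all these results share the same rate exponent and the same choice of $\lambda$ (resp.\ $N$), both displays of Corollary~\ref{convex} follow.

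The only delicate point -- and where a hasty argument would fail -- is the complexity transfer above: one must genuinely exploit that $\GG$ is range-bounded in order to turn convexity of $\ell(\cdot,y)$ into a single global Lipschitz constant along the whole class, and one must be careful that the bracketing entropy is controlled in $L_2(\mu)$ for the \emph{same} $\mu$ (either $\nu_Y$ or $P$) as in the Bernstein condition. Everything after that is bookkeeping: substituting the verified modulus-of-continuity estimate into the already proved Theorems~\ref{deconv}--\ref{svdtheo} (or the corollary of Section~\ref{appendix}).
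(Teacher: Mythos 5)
Your proposal matches the paper's own (very terse) argument: the paper likewise obtains the corollary by using convexity of $\ell(\cdot,y)$ on the bounded range of $\GG$ to transfer the bracketing-entropy bound from $\mathcal{G}$ to the loss class, invoking Lemma~\ref{dudleynoisy} to verify the modulus-of-continuity hypothesis, and then quoting Theorems~\ref{deconv} and~\ref{svdtheo} (or the restricted-loss corollary when $\mu=P$). Your write-up in fact supplies more detail than the paper on the convexity-to-Lipschitz step, but the route is the same.
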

This corollary is a special version of somewhat more general analysis of the previous sections. It allows to consider standard hypothesis sets $\mathcal{G}$ such as VC classes or kernel classes (\cite{nedelec} or \cite{mendelsonkernel}).
Another possible powerful direction is to study directly the complexity of the class $\{\ell_\lambda(g),g\in\mathcal{G}\}$ thanks to entropy numbers of compact operators. For this purpose, note that if $\mathcal{X}$ is compact,
$
\ell_\lambda(g,z,y)=\int_\mathcal{X}k_\lambda(z,x)\ell(g(x),y)\nu(dx)
$ can be considered as the image of $\ell(g)$ by the integral operator $L_{k_\lambda}$ associated to the function $k_\lambda$. Hence we have:
$$
\{\ell_\lambda(g),g\in\mathcal{G}\}=L_{k_\lambda}(\{\ell(g),g\in\mathcal{G}\}).
$$
Furthermore, it is clear that if $k_\lambda$ is continuous, $L_{k_\lambda}$ is well-defined and compact. Using for instance \cite{will}, and provided that $\ell$ is bounded and $\mathcal{G}$ consists of bounded functions in $L_2(\nu,\mathcal{X})$, entropy of the class $\{\ell_\lambda(g),g\in\mathcal{G}\}$ could be controlled in terms of the eigenvalues of the integral operator. In this case, it is clear that the entropy of the class $\{\ell_\lambda(g),g\in\mathcal{G}\}$ depends strongly on the spectrum of the operator $A$.\\
More precisely, if $A$ is a convolution product, Section 3.1 deals with kernel deconvolution estimators. As a result, operator $L_{k_\lambda}$ is defined as the convolution product $L_{k_\lambda}f(z)=\frac{1}{\lambda}\mathcal{K}_{\eta}(\frac{\cdot}{\lambda})*f(z)$. 
Its spectrum is related to the behavior of the Fourier transform of the deconvolution kernel estimator, which corresponds to the quantity $\frac{\mathcal{F}[\mathcal{K}]}{\mathcal{F}[\eta](\frac{\cdot}{\lambda})}$. At the end, the control of the entropy of the class of interest $\{\ell_\lambda(g),g\in\mathcal{G}\}$ could be calculated thanks to an assumption over the behavior of the Fourier transform of the noise distribution $\eta$ such as \textbf{(A1)}.

\section{Conclusion}

This paper has tried to investigate the effect of indirect observations into the statement of fast rates of convergence in empirical risk minimization. Many issues could be considered in future works.

The main result is a general upper bound in the statistical learning context, when we observe indirect inputs $Z_i,\,i=1,\ldots ,n$ with law $Af$. The proof is based on a deviation inequality for supprema of empirical processes. It seems to fit the indirect case provided that it is used carefully. For this purpose, we introduce Lipschitz and bounded classes $\{\ell_\lambda(g),g\in\mathcal{G}\}$, depending on a smoothing parameter $\lambda$. It allows us to quantify the effect of the inverse problem on the empirical process machinery. The price to pay is summarized in a constant $c(\lambda)$ which exploses as $\lambda\to 0$. The behavior of this constant is related to the degree of ill-posedness. Here in the midly ill-posed case, $c(\lambda)$ grows polyniomally as a function of $\lambda$. 

The result of Section \ref{sec2} suggests the same degree of generality as the results of \cite{kolt} in the direct case. It is well-known that the work of Koltchinskii allows to recover most of the recent results in statistical learning theory and the area of fast rates. Consequently, there is a nice hope that many problems dealing with indirect observations could be managed following the guiding thread of this paper.\\
 
The estimation procedure proposed in this paper can be discussed for several reasons. Firstly, it is not adaptive in many sense. At the first glance, we can see three levels of adaptation: (1) adaptation to the operator $A$; (2) adaptation to the tunable parameter $\lambda$; (3) adaptation or model selection of the hypothesis space $\mathcal{G}$. At this time, it is important to note that at least in the direct case, the same machinery used to analyzed the order of the excess risk can be applied to produce penalized empirical risk  minimization (see \cite{tsybakov2005,kolt,svm,loustau2}). However, the construction of adaptive versions of $\lambda$-ERM of the previous sections is a challenging open problem.\\

Finally, the aim of this contribution was to derive excess risk bounds under standard assumptions over the complexity and the geometry of the considered class $\mathcal{G}$. An alternative point of view would be to state oracle-type inequalities. Indeed, Theorem \ref{mainresult}-\ref{svdtheo} could be written in terms of exact asymptotic oracle inequalities of the form:
$$
\E R_l(\hat{g}_n^\lambda)\leq \inf_{g\in\mathcal{G}}R_l(g)+r_n(\mathcal{G}),
$$
where the residual term $r_n(\mathcal{G})$ corresponds to the rates of convergence in Theorem  \ref{mainresult}-\ref{svdtheo}. In this setting, it is well-known that ERM estimators reach optimal fast rates under a Bernstein assumption. However, the Bernstein assumption presented in Definition \ref{KKb} is a strong assumption related to the geometry of the class $\mathcal{G}$. \cite{nonexactlecue} proposes to relax significantly the Bernstein assumption and point out non-exact oracle inequalities of the form:
$$
\E R_l(\hat{g}_n^\lambda)\leq (1+\epsilon)\inf_{g\in\mathcal{G}}R_l(g)+r_n(\mathcal{G}),
$$
for some $\epsilon>0$. These results hold without Bernstein condition for any non-negative loss functions. There is a nice hope that such a study can be done in the presence of indirect observations, using some minor modifications in the proofs.

\section{Proofs}
\label{proofs}
The main ingredient of the proofs is a concentration inequality for empirical processes in the spirit of Talagrand (\cite{talagrand}). We use precisely a Bennet deviation bound for suprema of empirical processes due to Bousquet (see \cite{bousquet}) applied to a class of measurable functions $f\in\mathcal{F}$ from $\mathcal{X}$ into $[0,K]$. In this case it is stated in \cite{bousquet} that for all $t>0$:
$$
\P\left(Z\geq \E Z+\sqrt{2t(n\sigma^2+(1+K)\E Z)}+\frac{t}{3} \right)\leq\exp(-t),
$$
where
$$
Z=\sup_{f\in\FF}\left|\sum_{i=1}^n f(X_i)\right| \mbox{ and } \sup_{f\in\FF} \mathrm{Var}(f(X_1))\leq\sigma^2.
$$
The proof of Lemma \ref{interm} below uses iteratively Bousquet's inequality and gives rise to solve the fixed point equation as in \cite{kolt}. For this purpose, we introduce, for a function $\psi:\R_+\to\R_+$, the following transformations:
$$
\breve{\psi}(\delta)=\sup_{\sigma\geq \delta}\frac{\psi(\sigma)}{\sigma}\mbox{ and }\psi^\dagger(\epsilon)=\inf\{\delta>0:\breve{\psi}(\delta)\leq \epsilon\}.
$$
We are also interested in the following discretization version of these transformations:
$$
\breve{\psi}_q(\delta)=\sup_{\delta_j\geq \delta}\frac{\psi(\delta_j)}{\delta_j}\mbox{ and }\psi^\dagger_{q}(\epsilon)=\inf\{\delta>0:\breve{\psi}_{q}(\delta)\leq \epsilon\},
$$
where for some $q>1$, $\delta_j=q^{-j}$ for $j\in\mathbb{N}$.

In the sequel, constant $K,C>0$ denote generic constants that may vary from line to line. 
\subsection{Proof of Theorem \ref{mainresult}}
\begin{lemma}
\label{interm}
Suppose $\{\ell_\lambda(g),g\in\mathcal{G}\}$ is such that:
$$
\sup_{g\in\mathcal{G}}\no \ell_\lambda(g)\no_\infty\leq K(\lambda).
$$
Suppose $\{\ell_\lambda(g),g\in\mathcal{G}\}$ has approximation function $a(\lambda)$ and residual constant $0<r<1$ according to Definition \ref{app}. Define, for some constant $K>0$:
\beqnn
U_{n}^\lambda(\delta_j,t)=K\left[\phi_n^\lambda(\GG,\delta_j)+\sqrt{\frac{t}{n}}D^\lambda(\GG,\delta_j)+\sqrt{\frac{t}{n}(1+K(\lambda))\phi_n^\lambda(\GG,\delta_j)}+\frac{t}{n}\right],
\eeqnn
\beqnn
\phi_n^\lambda(\GG,\delta_j)=\E \sup_{g,g'\in\GG (\delta_j)}|\tilde{P}_n-\tilde{P}|[\ell_\lambda(g)-\ell_\lambda(g')],
\eeqnn
\beqnn
D^\lambda(\GG,\delta_j)=\sup_{g,g'\in\GG (\delta_j)}\sqrt{\tilde{P}(\ell_\lambda(g)-\ell_\lambda(g'))^2}.
\eeqnn
Then $\forall \delta\geq \delta_n^\lambda(t)=[U_{n}^\lambda(\cdot,t)]^\dagger_q(\frac{1-r}{2q})$, if $a(\lambda)\leq\frac{1-r}{4q}\delta$ we have for $\hat{g}=\hat{g}_n^\lambda$:
\beqnn
\mathbb{P}(R_{\ell}(\hat{g})-R_{\ell}(g^*)\geq \delta)\leq\log_q(\frac{1}{\delta}) e^{-t}.
\eeqnn
\end{lemma}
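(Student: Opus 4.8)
I would run Koltchinskii's slicing (peeling) argument, transplanted from the direct loss to the modified loss $\ell_\lambda$. Write $\mathcal{E}(g)=R_\ell(g)-R_\ell(g^*)$. Since $\hat g=\hat g_n^\lambda$ minimizes $g\mapsto\tilde{P}_n\ell_\lambda(g)$, we have $R_n^\lambda(\hat g)-R_n^\lambda(g^*)\le 0$; inserting $R_\ell^\lambda$ exactly as in the Inverse Vapnik's bound \eqref{nvb} gives
\[
\mathcal{E}(\hat g)\le(\tilde{P}-\tilde{P}_n)\bigl(\ell_\lambda(\hat g)-\ell_\lambda(g^*)\bigr)+(R_\ell-R_\ell^\lambda)(\hat g-g^*),
\]
and Definition \ref{app} bounds the last summand by $a(\lambda)+r\,\mathcal{E}(\hat g)$, so that
\[
(1-r)\,\mathcal{E}(\hat g)\le(\tilde{P}-\tilde{P}_n)\bigl(\ell_\lambda(\hat g)-\ell_\lambda(g^*)\bigr)+a(\lambda).
\]

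Next I would slice the event $\{\mathcal{E}(\hat g)\ge\delta\}$. Because $\mathcal{E}(\hat g)\in[0,1]=[0,\delta_0]$, there is an integer $j\ge 0$ with $\delta_{j+1}<\mathcal{E}(\hat g)\le\delta_j$, and necessarily $\delta_j\ge\delta$, hence $j\le\log_q(1/\delta)$. On this shell $\hat g\in\GG(\delta_j)$ and $g^*\in\GG(\delta_j)$, so the previous display gives
\[
\tfrac{1-r}{q}\,\delta_j<(1-r)\,\mathcal{E}(\hat g)\le\sup_{g\in\GG(\delta_j)}\bigl|\tilde{P}-\tilde{P}_n\bigr|\bigl(\ell_\lambda(g)-\ell_\lambda(g^*)\bigr)+a(\lambda).
\]
Invoking the hypothesis $a(\lambda)\le\tfrac{1-r}{4q}\delta\le\tfrac{1-r}{4q}\delta_j$, the $j$-th shell is contained in the event $B_j=\bigl\{\sup_{g\in\GG(\delta_j)}|\tilde{P}-\tilde{P}_n|(\ell_\lambda(g)-\ell_\lambda(g^*))>\tfrac{1-r}{2q}\delta_j\bigr\}$.

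The core step is to prove $\mathbb{P}(B_j)\le e^{-t}$ for each $j$ with $\delta_j\ge\delta\ge\delta_n^\lambda(t)$. I would apply the Bousquet deviation inequality to the class $\{\ell_\lambda(g)-\ell_\lambda(g^*):g\in\GG(\delta_j)\}$ (made countable by a routine separability reduction and shifted/symmetrized so as to fit the bounded, one-sided framework of the cited bound): by \textbf{(B)} these functions have supremum norm at most $2K(\lambda)$; their variances under $\tilde{P}$ are at most $D^\lambda(\GG,\delta_j)^2$; and, since $g^*\in\GG(\delta_j)$, the expected supremum is at most $\phi_n^\lambda(\GG,\delta_j)$. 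After normalizing by $n$, the four terms produced by Bousquet's bound — expected supremum, the variance term $\sqrt{t/n}\,D^\lambda(\GG,\delta_j)$, the mixed term $\sqrt{(t/n)(1+K(\lambda))\phi_n^\lambda(\GG,\delta_j)}$, and the Bernstein term $t/n$ — are precisely those assembled in $U_n^\lambda(\delta_j,t)$, up to the absolute constant $K$. Hence, outside an event of probability $\le e^{-t}$, the supremum appearing in $B_j$ is at most $U_n^\lambda(\delta_j,t)$. Finally, since $\delta\ge\delta_n^\lambda(t)=[U_n^\lambda(\cdot,t)]^\dagger_q(\tfrac{1-r}{2q})$ and $\delta\mapsto\sup_{\delta_j\ge\delta}U_n^\lambda(\delta_j,t)/\delta_j$ is non-increasing, we get $U_n^\lambda(\delta_j,t)\le\tfrac{1-r}{2q}\delta_j$ for every $\delta_j\ge\delta$; therefore $B_j\subseteq\{\sup>U_n^\lambda(\delta_j,t)\}$ and $\mathbb{P}(B_j)\le e^{-t}$. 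A union bound over the at most $\log_q(1/\delta)$ relevant indices, using $\{\mathcal{E}(\hat g)\ge\delta\}\subseteq\bigcup_j B_j$, yields the claim.

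I expect the real work to be the constant bookkeeping in the slicing step: aligning the value $\tfrac{1-r}{2q}$ fed to the fixed-point transform $[\cdot]^\dagger_q$ with the budget $\tfrac{1-r}{q}\delta_j-a(\lambda)$ left on each shell — which is exactly what dictates the hypothesis $a(\lambda)\le\tfrac{1-r}{4q}\delta$ — together with the careful, though essentially routine, reduction of Bousquet's inequality (stated for nonnegative bounded functions and a one-sided supremum) to the two-sided supremum of the centered process over the localized, $\lambda$-dependent class, so that all four summands of $U_n^\lambda$ come out with admissible constants once the variance has been controlled by $D^\lambda(\GG,\delta_j)^2$ via \textbf{(B)}.
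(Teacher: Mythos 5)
Your proposal is correct and follows essentially the same route as the paper: the risk decomposition via Definition \ref{app}, peeling over the shells $\delta_{j+1}<R_\ell(\hat g)-R_\ell(g^*)\le\delta_j$, Bousquet's inequality on each localized class to bound the supremum by $U_n^\lambda(\delta_j,t)$, the $\dagger_q$-transform to convert $\delta\ge\delta_n^\lambda(t)$ into $U_n^\lambda(\delta_j,t)\le\frac{1-r}{2q}\delta_j$, and a union bound over the $\log_q(1/\delta)$ shells. The only cosmetic difference is that you argue by direct event containment where the paper phrases the same step as a contrapositive ($\hat g$ in a shell forces $\delta<\delta_n^\lambda(t)$); the constant bookkeeping around $a(\lambda)\le\frac{1-r}{4q}\delta$ matches the paper's.
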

\begin{proof} The proof follows \cite{kolt} extended to the noisy set-up.

Given $q>1$, we introduce a sequence of positive numbers:
\beqnn
\delta_j=q^{-j},\,\forall j\geq 1.
\eeqnn
Given $n,j\geq 1$, $t>0$ and $\lambda\in\R^d_+$, consider the event:
\beqnn
E_{n,j}^\lambda(t)=\left\{\sup_{g,g'\in\GG (\delta_j)}|\tilde{P}_n-\tilde{P}|[\ell_\lambda(g)-\ell_\lambda(g')]\leq U_{n}^\lambda(\delta_j,t)\right\}.
\eeqnn
Then, we have, using Bousquet's version of Talagrand's concentration inequality (see \cite{bousquet}), for some $K>0$, $\P(E_{n,j}^{\lambda}(t)^C)\leq e^{-t}$, $\forall t\geq 0$.\\
We restrict ourselves to the event $E_{n,j}^\lambda(t)$. \\
Using Definition \ref{app}, we have with a slight abuse of notations:
\beqnn
R_{\ell}(\hat{g})-R_{\ell}(g^*)&\leq& (\tilde{P}_n-\tilde{P})(\ell_\lambda(g^*)-\ell_\lambda(\hat{g}))+(R_{\ell}-R^\lambda_\ell)(\hat{g}-g^*)\\
&\leq&(\tilde{P}_n-\tilde{P})(\ell_\lambda(g^*)-\ell_\lambda(\hat{g}))+a(\lambda)+r(R_{\ell}(\hat{g})-R_{\ell}(g^*)).
\eeqnn
Hence, we have:
\beqnn
\delta_{j+1}\leq R_{\ell}(\hat{g})-R_{\ell}(g^*)\leq \delta_j\Rightarrow \delta_{j+1}\leq \frac{1}{1-r}\left((\tilde{P}_n-\tilde{P})(\ell_\lambda(g^*)-\ell_\lambda(\hat{g}))+a(\lambda)\right).
\eeqnn
On the event $E_{n,j}^\lambda(t)$, it follows that $\forall \delta\leq\delta_j$:
\beqnn
\delta_{j+1}\leq R_{\ell}(\hat{g})-R_{\ell}(g^*)\leq \delta_j\Rightarrow\delta_{j+1}&\leq& \frac{1}{1-r}U_{n}^\lambda(\delta_j,t)+\frac{1}{1-r}a(\lambda)\\
&\leq &  \frac{\delta_j}{1-r}V_n^\lambda(\delta,t)+\frac{1}{1-r}a(\lambda),
\eeqnn
where $V_n^\lambda(\delta,t)=\breve{U}_{n}^\lambda(\delta,t)$ satisfies (see \cite{kolt}):
$$
U_{n}^\lambda(\delta_j,t)\leq \delta_jV_n^\lambda(\delta,t), \forall\delta\leq \delta_j. 
$$
We obtain:
\beqnn
\frac{1}{1-r}V_{n}^\lambda(\delta,t)\geq \frac{1}{q}-\frac{q^j}{1-r}a(\lambda)> \frac{1}{2q},
\eeqnn
since we have:
$$
a(\lambda)\leq\frac{1-r}{4q}\delta\Longrightarrow \frac{q^j}{1-r}a(\lambda)<\frac{1}{2q}.
$$
It follows from the definition of the $\dagger$-transform that:
\beqnn
\delta< [{U}_{n}^\lambda(\cdot,t)]^{\dagger}(\frac{1-r}{2q})=\delta_n^\lambda(t).
\eeqnn
Hence, we have on the event $E_{n,j}^\lambda(t)$, for $\delta_j\geq \delta$:
\beqnn
\delta_{j+1}\leq R_{\ell}(\hat{g})-R_{\ell}(g^*)\leq \delta_j\Rightarrow\delta\leq \delta_n^\lambda(t),
\eeqnn
or equivalently,  
\beqnn
\delta_n^\lambda(t)\leq \delta\leq \delta_j\Rightarrow \hat{g}\notin\GG(\delta_j,\delta_{j+1}),
\eeqnn

where $\GG(c,C)=\{g\in\mathcal{G}:c\leq R_{\ell}(g)-R_{\ell}(g^*)\leq C\}$. We eventually obtain:
\beqnn
\bigcap_{\delta_j\geq \delta}E_{n,j}^\lambda(t)\mbox{ and }\delta\geq \delta_n^\lambda(t)\Rightarrow R_{\ell}(\hat{g})-R_{\ell}(g^*)\leq \delta.
\eeqnn
This formulation allows us to write by union's bound:
\beqnn
\mathbb{P}(R_\ell(\hat{g})-R_\ell(g^*)\geq \delta)\leq \sum_{\delta_j\geq \delta} \P(E_{n,j}^{\lambda }(t)^C)\leq \log_q\left(\frac{1}{\delta}\right)e^{-t},
\eeqnn
since $\{j:\delta_j\geq \delta\}=\{j:j\leq -\frac{\log \delta}{\log q}\}$.
\end{proof}
\begin{proof}[Proof of Theorem 1]
The proof is a direct application of Lemma 1. We have, for some constant $K>0$:
\beqnn
U_n^\lambda(\delta,t)= K\left[\phi_n^\lambda(\GG,\delta)+\sqrt{\frac{t}{n}\phi_n^\lambda(\GG,\delta)(1+K(\lambda))}+\sqrt{\frac{t}{n}}D^\lambda(\GG,\delta)+\frac{t}{n}\right].
\eeqnn
Using the Bernstein condition gathering with the complexity assumption over $\tilde{\omega}_n(\GG,\delta)$, we have:
\beqnn
\phi_n^\lambda(\GG,\delta)&\leq &\E \sup_{g,g'\in\GG (\delta)}|\tilde{P}_n-\tilde{P}|[\ell_\lambda(g)-\ell_\lambda(g')]\\
&\leq& \E \sup_{g,g'\in\GG: \no \ell(g)-\ell(g')\no_{L^2(\mu)}\leq 2\sqrt{\kappa_0}\delta^{\frac{1}{2\kappa}}}|\tilde{P}_n-\tilde{P}|[\ell_\lambda(g)-\ell_\lambda(g')]=\tilde{\omega}_n(\GG,2\sqrt{\kappa_0}\delta^{\frac{1}{2\kappa}})\\
&\leq & C\frac{c(\lambda)}{\sqrt{n}}\delta^{\frac{1-\rho}{2\kappa}}.
\eeqnn
A control of $D^\lambda(\GG,\delta)$ using the Lipschitz assumption leads to:
\beqnn
U_n^\lambda(\delta,t)\leq C\left[\frac{c(\lambda)}{\sqrt{n}}\delta^{\frac{(1-\rho)}{2\kappa}}+\frac{c(\lambda)^{1/2}}{n^{3/4}}\delta^{\frac{1-\rho}{4\kappa}}\sqrt{K(\lambda)t}+\sqrt{\frac{t}{n}} c(\lambda)\delta^{\frac{1}{2\kappa}}+\frac{t}{n}\right].
\eeqnn
Hence we have from an easy calculation:
\beqnn
\delta_n^\lambda(t)\leq C \max\left(\left(\frac{c(\lambda)}{\sqrt{n}}\right)^{\frac{2\kappa}{2\kappa+\rho -1}},\frac{[c(\lambda)K(\lambda)]^{\frac{2\kappa}{4\kappa+\rho-1}}}{n^{\frac{3\kappa}{4\kappa+\rho-1}}}t^{\frac{2\kappa}{4\kappa+\rho-1}},\left(\frac{c(\lambda)}{\sqrt{n}}\right)^{\frac{2\kappa}{2\kappa-1}}t^{\frac{2\kappa}{2\kappa -1}},\frac{t}{n}\right).
\eeqnn
Consequently, for any $0<t\leq 1$, for $n$ large enough, we have:
$$
\left(\frac{c(\lambda)}{\sqrt{n}}\right)^{\frac{2\kappa}{2\kappa+\rho -1}}\geq \delta_n^\lambda(t+\log\log_q n),
$$
provided that:
$$
K(\lambda)\leq \frac{c(\lambda)^{\frac{2\kappa}{2\kappa+\rho-1}}n^{\frac{\kappa+\rho-1}{2\kappa+\rho-1}}}{1+\log\log_q n}.
$$
It remains to use Lemma 2 with $t$ replaced by $t+\log\log_q n$ to obtain:
$$
\P\left(R_\ell(\hat{g}_n^\lambda)-R_\ell(g^*)\geq K(1+t)\left(\frac{c(\lambda)}{\sqrt{n}}\right)^{\frac{2\kappa}{2\kappa+\rho -1}}\right)\leq e^{-t},
$$
provided that the approximation function obeys to the following inequality:
\beqnn
a(\lambda)\leq K\frac{(1-r)}{4q}\left(\frac{c(\lambda)}{\sqrt{n}}\right)^{\frac{2\kappa}{2\kappa+\rho -1}}.
\eeqnn
\end{proof}
\subsection{Proof of Theorem \ref{deconv}}
Theorem 2 is a straightforward application of Theorem 1 to the particular case of errors in variables using deconvolution kernel estimators. 

First step is to check that the estimation procedure described in Section 3.1 gives rise to a LB-class with respect to $\nu_Y$ where $\nu$ is the Lebesgue measure on $\R^d$.
\begin{lemma}
\label{liphold}
Suppose \textbf{(A1)} holds and suppose $l(g(\cdot),y)\in L_2(\mathcal{X})$ for any $y\in\mathcal{Y}$. Consider a deconvolution kernel $\mathcal{K}_\eta(t) = \FF^{-1}\left[ \frac{\FF[\mathcal{K}](\cdot)}{\FF[\eta](\cdot/\lambda)}\right]$ where $\mathcal{K}(t)=\Pi_{i=1}^d\mathcal{K}_i(t_i)$ where $\mathcal{K}_i$ have compactly supported and bounded Fourier transform. Then we have:
\beqnn
\no \ell_\lambda(g)-\ell_\lambda(g')\no_{L_2(\tilde{P}}\lesssim \Pi_{i=1}^d\lambda_i^{-\beta_i}\no \ell(g)-\ell(g')\no_{L_2(\nu_Y)},
\eeqnn
and moreover:
\beqnn
\sup_{g\in\mathcal{G}}\no \ell_\lambda(g)\no_\infty\lesssim  \prod_{i=1}^d \lambda_i^{-\beta_i-1/2}.
\eeqnn
\end{lemma}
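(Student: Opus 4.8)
The plan is Fourier-analytic. Writing $h_y := \ell(g(\cdot),y)-\ell(g'(\cdot),y)$ and $K_\lambda(u) := \prod_{i=1}^d\lambda_i^{-1}\mathcal{K}_\eta(u_1/\lambda_1,\dots,u_d/\lambda_d)$, one has $\ell_\lambda(g,(z,y))-\ell_\lambda(g',(z,y)) = (h_y*K_\lambda)(z)$. Since $\mathcal{K}=\prod_i\mathcal{K}_i$ and $\eta=\prod_i\eta_i$, the kernel tensorizes and a direct computation gives $\mathcal{F}[K_\lambda](\xi)=\prod_{i=1}^d \mathcal{F}[\mathcal{K}_i](\lambda_i\xi_i)/\mathcal{F}[\eta_i](\xi_i)$. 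The two facts I would extract from \textbf{(A1)} and the hypotheses on $\mathcal{K}$ are: (i) $\mathcal{F}[\eta_i]$ is continuous, non-vanishing and $\sim|t|^{-\beta_i}$ at infinity, hence $|\mathcal{F}[\eta_i](t)|\geq c_\eta(1+|t|)^{-\beta_i}$ for every $t\in\R$; and (ii) $\mathrm{supp}\,\mathcal{F}[\mathcal{K}_i]\subseteq[-a,a]$ with $\mathcal{F}[\mathcal{K}_i]$ bounded, so that $\mathcal{F}[K_\lambda]$ is supported in $S_\lambda:=\prod_i\{|\xi_i|\leq a/\lambda_i\}$, on which (combining (i)--(ii), for $\lambda_i$ bounded) $|\mathcal{F}[K_\lambda](\xi)|^2\lesssim\prod_i\lambda_i^{-2\beta_i}$; moreover $|S_\lambda|\lesssim\prod_i\lambda_i^{-1}$.

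For the Lipschitz inequality, Plancherel gives $\|h_y*K_\lambda\|_{L_2(\R^d)}^2=\int_{S_\lambda}|\mathcal{F}[h_y]|^2|\mathcal{F}[K_\lambda]|^2\lesssim\bigl(\prod_i\lambda_i^{-2\beta_i}\bigr)\|h_y\|_{L_2(\R^d)}^2$. The density of $Z\mid Y=y$ is $Af_y=f_y*\eta$, which is bounded by $\|f_y\|_\infty<\infty$; therefore $\|\ell_\lambda(g)-\ell_\lambda(g')\|_{L_2(\tilde P)}^2\leq\max_y\|Af_y\|_\infty\sum_y p_y\|h_y*K_\lambda\|_{L_2(\R^d)}^2\lesssim\bigl(\prod_i\lambda_i^{-2\beta_i}\bigr)\sum_y p_y\|h_y\|_{L_2(\R^d)}^2$, and the last sum equals $\|\ell(g)-\ell(g')\|_{L_2(\nu_Y)}^2$. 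Taking square roots yields the first assertion.

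For the uniform bound, $\ell_\lambda(g,(z,y))=(\ell(g(\cdot),y)*K_\lambda)(z)$ is a convolution of two $L_2(\R^d)$ functions, so Cauchy--Schwarz gives $\|\ell_\lambda(g)\|_\infty\leq\max_y\|\ell(g(\cdot),y)\|_{L_2(\R^d)}\,\|K_\lambda\|_{L_2(\R^d)}$, and by hypothesis $\sup_{g\in\GG}\max_y\|\ell(g(\cdot),y)\|_{L_2(\R^d)}<\infty$. By Plancherel and facts (i)--(ii), $\|K_\lambda\|_{L_2(\R^d)}^2=\int_{S_\lambda}|\mathcal{F}[K_\lambda]|^2\lesssim\bigl(\prod_i\lambda_i^{-2\beta_i}\bigr)|S_\lambda|\lesssim\prod_i\lambda_i^{-2\beta_i-1}$, whence $\|\ell_\lambda(g)\|_\infty\lesssim\prod_i\lambda_i^{-\beta_i-1/2}$.

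The step I expect to be delicate is upgrading the asymptotic $|\mathcal{F}[\eta_i](t)|\sim|t|^{-\beta_i}$ to the \emph{global} lower bound $|\mathcal{F}[\eta_i](t)|\geq c_\eta(1+|t|)^{-\beta_i}$ used in fact (i): large $|t|$ is handled by the asymptotics, while bounded $|t|$ uses continuity and non-vanishing of $\mathcal{F}[\eta_i]$ on compacts. The remaining care is bookkeeping — checking that $K_\lambda\in L_1\cap L_2(\R^d)$ so that $h_y*K_\lambda$ is a bona fide continuous function with $\mathcal{F}[h_y*K_\lambda]=\mathcal{F}[h_y]\mathcal{F}[K_\lambda]$, and noting that the uniform bound $\sup_{g\in\GG}\max_y\|\ell(g(\cdot),y)\|_{L_2(\R^d)}<\infty$ is precisely what the integrability hypothesis $\ell(g(\cdot),y)\in L_2(\R^d)$ supplies.
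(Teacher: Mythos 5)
Your proof is correct and follows essentially the same route as the paper's: Plancherel plus the compact support and boundedness of $\mathcal{F}[\mathcal{K}]$ to bound $|\mathcal{F}[K_\lambda]|$ by $\prod_i\lambda_i^{-\beta_i}$ on the relevant frequency band (using \textbf{(A1)} there), the boundedness of $Af_y$ to pass from $L_2(\tilde{P})$ to $L_2(\nu_Y)$, and Cauchy--Schwarz with $\Arrowvert K_\lambda\Arrowvert_{L_2}\lesssim\prod_i\lambda_i^{-\beta_i-1/2}$ for the uniform bound. Your treatment is in fact slightly more careful than the paper's on two points it leaves implicit: the explicit tensorization to dimension $d$ and the upgrade of the asymptotic in \textbf{(A1)} to a global lower bound on $|\mathcal{F}[\eta_i]|$.
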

\begin{proof}
We have in dimension $d=1$ for simplicity, using the boundedness assumptions:
\beqnn
\no \ell_\lambda(g)-\ell_\lambda(g')\no_{L_2(\tilde{P})}^2
& = & \sum_{y\in\mathcal{Y}}p_y\int_\mathcal{\tilde{X}} \left[ \int_\mathcal{X}\frac{1}{\lambda} \mathcal{K}_\eta \left(\frac{z-x}{\lambda}\right) (\ell(g(x),y))-\ell(g'(x),y)))dx \right]^2 Af_y(z)dz \\
& = &\sum_{y\in\mathcal{Y}}p_y \int_{\tilde{\mathcal{X}}} \left[ \frac{1}{\lambda}\mathcal{K}_\eta(\frac{\cdot}{\lambda})*(\ell(g(\cdot),y)-\ell(g'(\cdot),y))(z) \right]^2 Af_y(z)dz \\
& \leq &C \sum_{y\in\mathcal{Y}}p_y \int_\mathcal{\tilde{X}} \frac{1}{\lambda^2}|\mathcal{F}[\mathcal{K}_\eta(\frac{\cdot}{\lambda})](t)|^2|\mathcal{F}[\ell(g(\cdot),y)-\ell(g'(\cdot),y)](t)|^2 dt \\
& \leq & C'\lambda^{-2\beta} \no \ell(g)-\ell(g')\no_{L_2(\nu_Y)}^2,
\eeqnn 
where we use in last line the following inequalities:
\beqnn
\frac{1}{\lambda^2} \left| \mathcal{F}[\mathcal{K}_\eta(./\lambda)](s) \right|^2 =  \left| \mathcal{F}[\mathcal{K}_\eta](s\lambda) \right|^2  \leq \sup_{t\in \mathbb{R}} \left| \frac{\mathcal{F}[\mathcal{K}](t\lambda)}{\mathcal{F}[\eta](t)} \right|^2 \leq \sup_{t\in [-\frac{K}{\lambda},\frac{K}{\lambda}]}  C\left| \frac{1}{\mathcal{F}[\eta](t)} \right|^2 \leq C \lambda^{-2\beta},
\eeqnn
provided that $\FF[\mathcal{K}]$ is compactly supported. 

By the same way, the second assertion holds since if $\ell(g(\cdot),y)\in L^2(\mathcal{X})$:
\begin{eqnarray*}
\sup_{(z,y)} | \ell_\lambda(g,(z,y))|
& \leq & \sup_{(z,y)} \int_{\mathcal{X}} \left| \frac{1}{\lambda}\mathcal{K}_\eta \left(\frac{z-x}{\lambda} \right)\ell(g(x),y))\right| dx \\
& \leq & C\sup_{z\in \mathcal{X}} \sqrt{\int_{\mathcal{X}} \left|\frac{1}{\lambda} \mathcal{K}_\eta \left(\frac{z-x}{\lambda} \right)\right|^2 dx} \\
& \leq & \lambda^{-\beta-1/2}.
\end{eqnarray*}
A straightforward generalization leads to the $d$-dimensional case.
\end{proof}
The last step is to get an approximation function for the class $\{\ell_\lambda(g),g\in\mathcal{G}\}$ with the following lemma:
\begin{lemma}
\label{biashold}
Suppose \textbf{(R1)} holds and $\mathcal{K}_\eta(t) = \FF^{-1}\left[ \frac{\FF[\mathcal{K}](\cdot)}{\FF[\eta](\cdot/\lambda)}\right]$ such that $K$ is a kernel of order $\gamma$ with respect to the Lebesgue measure. Then if $\{\ell(g)-\ell(g'),g,g'\in\mathcal{G}\}$ is Bernstein with parameter $\kappa\geq  1$, we have:
\beqnn
\forall g,g'\in\mathcal{G}, (R_{\ell}^\lambda-R_{\ell})(g-g')\leq a(\lambda)+r(R_{\ell}(g)-R_{\ell}(g')),
\eeqnn
where 
$$
a(\lambda)=C\sum_{i=1}^d\lambda_i^{\frac{2\kappa\gamma}{2\kappa-1}}\mbox{ and }r=\frac{1}{2\kappa}.
$$
Moreover, if $|\ell(g(x),y)-\ell(g'(x),y)|=|\ell(g(x),y)-\ell(g'(x),y)|^2$ and $\kappa>1$, we have:
$$
a(\lambda)=C\sum_{i=1}^d\lambda_i^{\frac{\kappa\gamma}{\kappa-1}}\mbox{ and }r=\frac{1}{\kappa}.
$$
\end{lemma}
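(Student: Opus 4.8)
The plan is to reduce the claim to the ordinary kernel bias estimate, the key point being that the deconvolution step is ``free'' at the level of the bias. First I would make $R_\ell^\lambda$ explicit. Starting from \eqref{ldec} and using Fubini together with the identity $\FF[\tfrac1\lambda\mathcal{K}_\eta(\cdot/\lambda)](t)=\FF[\mathcal{K}](\lambda t)/\FF[\eta](t)$, which is exactly what the deconvolution kernel \eqref{dk} is built to satisfy, one gets, writing $K_\lambda h:=\tfrac1\lambda\mathcal{K}(\cdot/\lambda)*h$ for the plain product-kernel smoothing,
\[
R_\ell^\lambda(g)=\E\,R_n^\lambda(g)=\sum_{y\in\mathcal{Y}}p_y\int_{\R^d}\ell(g(x),y)\,(K_\lambda f_y)(x)\,dx .
\]
In particular the noise density cancels in expectation and only $K_\lambda f_y$ remains (the noise survives only in the Lipschitz constant of Lemma \ref{liphold}). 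Hence
\[
(R_\ell^\lambda-R_\ell)(g-g')=\sum_{y\in\mathcal{Y}}p_y\int_{\R^d}\bigl(\ell(g(x),y)-\ell(g'(x),y)\bigr)\bigl((K_\lambda f_y)(x)-f_y(x)\bigr)\,dx .
\]

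Second, I would invoke the classical bias bound for a kernel of order $\gamma$ under \textbf{(R1)}: expanding $f_y$ to order $\lfloor\gamma\rfloor$ and using the vanishing moments of $\mathcal{K}$ gives $\no K_\lambda f_y-f_y\no_{L_2(\nu)}\lesssim\sum_{i=1}^d\lambda_i^\gamma$ and $\no K_\lambda f_y-f_y\no_\infty\lesssim\sum_{i=1}^d\lambda_i^\gamma$ (see e.g.\ \cite{booktsybakov}); this is the only place \textbf{(R1)} enters. For the first assertion, Cauchy--Schwarz in $L_2(\nu_Y)$ then gives $|(R_\ell^\lambda-R_\ell)(g-g')|\le C(\sum_i\lambda_i^\gamma)\no\ell(g)-\ell(g')\no_{L_2(\nu_Y)}$, and the Bernstein assumption with respect to $\nu_Y$ turns this (in the relevant regime $R_\ell(g)\ge R_\ell(g')$, which is the case $g'=g^*$ used in Definition \ref{app}) into $C\sqrt{\kappa_0}(\sum_i\lambda_i^\gamma)(R_\ell(g)-R_\ell(g'))^{1/(2\kappa)}$. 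Young's inequality $uv\le\frac{u^{2\kappa}}{2\kappa}+\frac{2\kappa-1}{2\kappa}v^{2\kappa/(2\kappa-1)}$ with $u=(R_\ell(g)-R_\ell(g'))^{1/(2\kappa)}$ splits this into $\frac1{2\kappa}(R_\ell(g)-R_\ell(g'))+C'(\sum_i\lambda_i^\gamma)^{2\kappa/(2\kappa-1)}$, and $(\sum_i\lambda_i^\gamma)^{2\kappa/(2\kappa-1)}\le d^{1/(2\kappa-1)}\sum_i\lambda_i^{2\kappa\gamma/(2\kappa-1)}$ yields $r=\tfrac1{2\kappa}$ and $a(\lambda)=C\sum_i\lambda_i^{2\kappa\gamma/(2\kappa-1)}$.

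Third, for the hard-loss case, where $|\ell(g(x),y)-\ell(g'(x),y)|$ equals its own square, I would not use Cauchy--Schwarz: bound the bias integral by $\sup_y\no K_\lambda f_y-f_y\no_\infty$ times $\sum_y p_y\int|\ell(g(x),y)-\ell(g'(x),y)|\,\nu(dx)=\no\ell(g)-\ell(g')\no_{L_1(\nu_Y)}$, and note that the indicator identity gives $\no\ell(g)-\ell(g')\no_{L_1(\nu_Y)}=\no\ell(g)-\ell(g')\no_{L_2(\nu_Y)}^2$, which by the Bernstein assumption is $\le\kappa_0(R_\ell(g)-R_\ell(g'))^{1/\kappa}$. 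One thus reaches $C(\sum_i\lambda_i^\gamma)(R_\ell(g)-R_\ell(g'))^{1/\kappa}$, and Young's inequality with conjugate exponents $\kappa$ and $\kappa/(\kappa-1)$ --- here is where $\kappa>1$ is used --- splits it into $\frac1\kappa(R_\ell(g)-R_\ell(g'))+C\sum_i\lambda_i^{\kappa\gamma/(\kappa-1)}$, which is the second assertion.

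The Fubini/Fourier identification of $\E R_n^\lambda$ and the kernel bias estimate are routine. The one point requiring care is that the factor multiplying $\no\ell(g)-\ell(g')\no_{L_2(\nu_Y)}$ must be the plain kernel bias $\sum_i\lambda_i^\gamma$ and must not inherit any ill-posedness factor $\prod_i\lambda_i^{-\beta_i}$ --- this is exactly what the Fourier identity above guarantees --- together with the bookkeeping of exponents in Young's inequality, which is what upgrades the naive exponent $\gamma$ to $2\kappa\gamma/(2\kappa-1)$ (resp.\ $\kappa\gamma/(\kappa-1)$) at the cost of a residual constant $r<1$.
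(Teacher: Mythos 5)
Your proposal is correct and follows essentially the same route as the paper: the identity $\E\,\tfrac1\lambda\mathcal{K}_\eta(\tfrac{Z-x}{\lambda})=\E\,\tfrac1\lambda\mathcal{K}(\tfrac{X-x}{\lambda})$ reduces the bias to the ordinary order-$\gamma$ kernel bias $\lesssim\sum_i\lambda_i^\gamma$ under \textbf{(R1)}, which is then combined with Cauchy--Schwarz, the Bernstein assumption and Young's inequality exactly as in the paper's proof. Your handling of the second (hard-loss) case, via $\no\ell(g)-\ell(g')\no_{L_1(\nu_Y)}=\no\ell(g)-\ell(g')\no_{L_2(\nu_Y)}^2$, is in fact more explicit than the paper's one-line remark but amounts to the same substitution of $2\kappa$ by $\kappa$.
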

\begin{proof}
We consider the case $d=1$ fro simplicity. Using the elementary property $\E K_\eta\left(\frac{Z-x}{\lambda}\right)=\E K\left(\frac{X-x}{\lambda}\right)$, gathering with Fubini, we can write:
\beqnn
(R_{\ell}^\lambda-R_{\ell})(g-g')
&=&\sum_{y\in\mathcal{Y}}p_y\int_{\mathcal{X}^2}K(u)(\ell(g(x),y)-\ell(g'(x),y))\left(f_y(x+\lambda u)-f_y(x)\right)dudx.
\eeqnn
Now since the $f_y$'s has $l=\lfloor\gamma\rfloor$ derivatives, there exists $\tau\in ]0,1[$ such that:
\beqnn
\int_\mathcal{X}K(u)\left(f_y(x+\lambda u)-f_y(x)\right)du&\leq& \int_{\mathcal{X}}K(u)\left(\sum_{k=1}^{l-1}\frac{f_y^{(k)}(x)}{k!}(\lambda u)^k+\frac{f^{(l)}(x+\tau\lambda u)}{l!}(\lambda u)^l\right)du\\
&\leq & \int_{\mathcal{X}}K(u)\left(\frac{(\lambda u)^l}{l!}(f^{(l)}(x+\tau\lambda u)-f^{(l)}(x))\right)du\\
&\leq & \int_{\mathcal{X}}\frac{L(\lambda u\tau)^{\gamma}}{l!}du\leq C\lambda^\gamma,
\eeqnn
where we use in last line the H\"older regularity of  the $f_y$'s and that $\mathcal{K}$ is a kernel of order $l=\lfloor\gamma\rfloor$. \\
Using the Bernstein assumption, one gets:
\beqnn
(R_{\ell}^\lambda-R_{\ell})(g-g')
&\leq&C\lambda^\gamma\sum_{y\in\mathcal{Y}}p_y\int_{\mathcal{X}}|\ell(g(x),y)-\ell(g'(x),y)|dx.\\
&\leq&C\lambda^\gamma\sqrt{\sum_{y\in\mathcal{Y}}p_y\left(\int_{\mathcal{X}}|\ell(g(x),y)-\ell(g'(x),y)|dx\right)^2}\\
&\leq &C \no \ell(g)-\ell(g')\no_{L_2(\nu)}\lambda^\gamma\\
&\leq & C\lambda^\gamma \left(R_{\ell}(g)-R_{\ell}(g')\right)^{\frac{1}{2\kappa}}\\
&\leq & C\lambda^{\frac{2\kappa\gamma}{2\kappa-1}}+\frac{1}{2\kappa} \left(R_{\ell}(g)-R_{\ell}(g')\right),
\eeqnn
where we use in last line Young's inequality:
$$
xy^r\leq ry+x^{1/1-r},\forall r<1,
$$
with $r=\frac{1}{2\kappa}$.\\
For the second statement, if $|\ell(g(x),y)-\ell(g'(x),y)|=|\ell(g(x),y)-\ell(g'(x),y)|^2$ and $\kappa>1$, it is straightforward that $2\kappa$ can be replaced by $\kappa$ to get the result.
\end{proof}
\begin{proof}[Proof of Theorem 2]
The proof is a straightforward application of Theorem 1. From Lemma \ref{liphold} and Lemma \ref{biashold}, condition \eqref{biascontrol} in Theorem 1 can be written:
$$
\sum_{i=1}^d\lambda_i^{\frac{2\kappa\gamma}{2\kappa-1}}\lesssim \left(\frac{\Pi_{i=1}^d\lambda_i^{-\beta_i}}{\sqrt{n}}\right)^{\frac{2\kappa}{2\kappa+\rho -1}}\Leftrightarrow\forall i=1,\ldots,d \,\,\lambda_i\lesssim n^{-\frac{2\kappa-1}{2\gamma(2\kappa+\rho-1)+2(2\kappa-1)\bar{\beta}}}.
$$
Applying Theorem 1 with a smoothing parameter $\lambda$ such that equalities hold above gives the rates of convergence. 
\end{proof}
\subsection{Proof of Theorem \ref{svdtheo}}
First step is to check that the estimation procedure described in Section 3.2 gives rise to a LB-class with respect to $\nu_Y$ with the following lemma.
\begin{lemma}
\label{lipsvd}
Suppose \textbf{(A2)} holds and $l(g(\cdot),y)\in L_2(\nu)$ for any $y\in\mathcal{Y}$. Then we have:
\beqnn
\no \ell_\lambda(g)-\ell_\lambda(g')\no_{L_2(\tilde{P})}\lesssim N^{\beta}\no \ell(g)-\ell(g')\no_{L_2(\nu_Y)},
\eeqnn
and moreover:
\beqnn
\sup_{g\in\mathcal{G}}\no \ell_\lambda(g)\no_\infty\lesssim  N^{\beta+1/2}.
\eeqnn
\end{lemma}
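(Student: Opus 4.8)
The plan is to reproduce, in the SVD setting, the two-step argument used for Lemma~\ref{liphold}, with the deconvolution kernel replaced by the symmetric kernel $k_N(z,x)=\sum_{k=1}^N b_k^{-1}\phi_k(z)\phi_k(x)$ attached to the decomposition \eqref{svd}, and with Plancherel's identity replaced by Parseval's identity in the orthonormal system $(\phi_k)_{k\ge1}$. Throughout I set $h(\cdot,y)=\ell(g(\cdot),y)-\ell(g'(\cdot),y)$ and $h_k^y=\int_{\mathcal{X}}\phi_k(x)h(x,y)\nu(dx)$, so that $\ell_N(g,z,y)-\ell_N(g',z,y)=\sum_{k=1}^N b_k^{-1}h_k^y\phi_k(z)$. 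Two mild regularity facts will be used on top of the stated hypotheses, and should be read as part of the mildly ill-posed SVD framework: the corrupted conditional densities $Af_y$ are bounded, and the singular functions satisfy $\sup_k\|\phi_k\|_\infty<\infty$ (both hold, for instance, for the trigonometric basis attached to a convolution operator with bounded noise density).

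For the first assertion, expanding the $L_2(\tilde P)$-norm over $\mathcal{Y}$ and using that the law of $Z$ given $Y=y$ has density $Af_y$,
\[
\|\ell_N(g)-\ell_N(g')\|_{L_2(\tilde P)}^2=\sum_{y\in\mathcal{Y}}p_y\int_{\tilde{\mathcal{X}}}\Big(\sum_{k=1}^N b_k^{-1}h_k^y\phi_k(z)\Big)^2 Af_y(z)\,\nu(dz)\lesssim \sum_{y\in\mathcal{Y}}p_y\sum_{k=1}^N b_k^{-2}(h_k^y)^2,
\]
where the last step bounds $Af_y$ by a constant and uses the orthonormality of $(\phi_k)$. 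By \textbf{(A2)}, $b_k^{-2}\lesssim k^{2\beta}\le N^{2\beta}$ for $k\le N$, and Parseval's identity gives $\sum_{k\ge1}(h_k^y)^2=\|h(\cdot,y)\|_{L_2(\nu)}^2$; hence the right-hand side is $\lesssim N^{2\beta}\sum_{y}p_y\|h(\cdot,y)\|_{L_2(\nu)}^2=N^{2\beta}\|\ell(g)-\ell(g')\|_{L_2(\nu_Y)}^2$, which is the stated Lipschitz inequality with constant of order $N^\beta$.

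For the second assertion, write $\ell_N(g,z,y)=\int_{\mathcal{X}}k_N(z,x)\ell(g(x),y)\nu(dx)$ and apply Cauchy--Schwarz in $x$, so that $|\ell_N(g,z,y)|\le\|\ell(g(\cdot),y)\|_{L_2(\nu)}\,\|k_N(z,\cdot)\|_{L_2(\nu)}$. The first factor is bounded uniformly over $\mathcal{G}$ since $\ell$ takes values in $[0,1]$ and $\ell(g(\cdot),y)\in L_2(\nu)$, exactly as in Lemma~\ref{liphold}. For the second, orthonormality of $(\phi_k)$ gives $\|k_N(z,\cdot)\|_{L_2(\nu)}^2=\sum_{k=1}^N b_k^{-2}\phi_k(z)^2\lesssim N^{2\beta}\sum_{k=1}^N\phi_k(z)^2\lesssim N^{2\beta+1}$, using \textbf{(A2)} and $\sup_k\|\phi_k\|_\infty<\infty$. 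Taking square roots yields $\sup_{g\in\mathcal{G}}\|\ell_N(g)\|_\infty\lesssim N^{\beta+1/2}$, uniformly in $z$ and $y$.

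Both computations are elementary once the bookkeeping is in place; the only delicate point is that the two boundedness facts quoted above — boundedness of the $Af_y$'s and uniform boundedness of the eigenfunctions $\phi_k$ — are needed beyond what is literally assumed and must be carried as standing hypotheses. Given these, \textbf{(A2)} plays precisely the role that \textbf{(A1)} played in Lemma~\ref{liphold}: the polynomial decay of the singular values turns the cut-off at level $N$ of the factors $b_k^{-1}$ into the rates $N^\beta$ and $N^{\beta+1/2}$, which then feed into Theorem~\ref{mainresult} exactly as $c(\lambda)$ did in the errors-in-variables case.
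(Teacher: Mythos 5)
Your proof is correct and follows essentially the same route as the paper: orthonormality of $(\phi_k)$ plus boundedness of $Af_y$ and \textbf{(A2)} for the Lipschitz bound, and Cauchy--Schwarz (you apply it in $x$, the paper over the index $k$ --- the same computation) for the sup-norm bound. Your explicit flagging of the two implicit standing hypotheses, boundedness of the $Af_y$'s and $\sup_k\Arrowvert\phi_k\Arrowvert_\infty<\infty$, is accurate: the paper's proof uses both silently.
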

\begin{proof}
The proof follows the proof of Lemma \ref{liphold}. We have in dimension $d=1$ for simplicity since $(\phi_k)_{k\in\N}$ is an orthonormal basis and using the boundedness assumptions over the $f_y$'s:
\beqnn
\no \ell_N(g)-\ell_N(g')\no_{L_2(\tilde{P})}^2\hspace{-0.3cm}
& = &\hspace{-0.3cm} \sum_{y\in\mathcal{Y}}p_y\int_\mathcal{\tilde{X}} \left( \sum_{k=1}^Nb_k^{-1}\int_\mathcal{X}\phi_k(z)\phi_k(x) (\ell(g(x),y))-\ell(g'(x),y)))\nu(dx) \right)^2 Af_y(z)\nu(dz) \\
& \lesssim &\sum_{y\in\mathcal{Y}}p_y\sum_{k=1}^Nb_k^{-2}\int_\mathcal{\tilde{X}}\phi_k(z)^2\left(\int_\mathcal{X} (\ell(g(x),y))-\ell(g'(x),y)))\phi_k(x)\nu(dx) \right)^2 \nu(dz)  \\
& \leq&C N^{2\beta}\sum_{y\in\mathcal{Y}}p_y\sum_{k=1}^N\left(\int_\mathcal{X} (\ell(g(x),y))-\ell(g'(x),y)))\phi_k(x)\nu(dx) \right)^2  \\
& \leq & CN^{2\beta} \no \ell(g)-\ell(g')\no_{L_2(\nu_Y)}^2.
\eeqnn 
By the same way, the second assertion holds since if $\ell(g)\in L^2(\nu)$:
\begin{eqnarray*}
\sup_{(z,y)} | \ell_\lambda(g,(z,y))|
& \leq & \sup_{(z,y)}\left|\sum_{k=1}^N b_k^{-1}\int_{\mathcal{X}} \phi_k(x)\phi_k(z)\ell(g,(x,y)) \nu(dx)\right| \\
& \leq & \sup_{(z,y)} \sqrt{\sum_{k=1}^N b_k^{-2}} \sqrt{\sum_{k=1}^N\left(\int \phi_k(x)\ell(g,(x,y))\nu(dx)\right)^{2}\phi_k(z)^2}\\
& \leq & C  N^{\beta+1/2}.
\end{eqnarray*}
\end{proof}
The last step is to control the bias term of the procedure with the following lemma:
\begin{lemma}
\label{biassvd}
Suppose \textbf{(R2)} holds and $\{\ell(g)-\ell(g'),g,'\in\mathcal{G}\}$ is Bernstein with parameter $\kappa\geq 1$. Then we have:
\beqnn
\forall g,g'\in\mathcal{G}, (R_{\ell}^\lambda-R_{\ell})(g-g')\leq a(\lambda)+r(R_{\ell}(g)-R_{\ell}(g'))^2,
\eeqnn
where 
$$
a(N)=C\sum_{i=1}^dN_i^{-\frac{2\kappa\gamma}{2\kappa-1}}\mbox{ and }r=\frac{1}{2\kappa}.
$$
Moreover, if $|\ell(g(x),y)-\ell(g'(x),y)|=|\ell(g(x),y)-\ell(g'(x),y)|^2$ and $\kappa>1$, we have:
$$
a(N)=C\sum_{i=1}^dN_i^{-\frac{\kappa\gamma}{\kappa-1}}\mbox{ and }r=\frac{1}{\kappa}.
$$
\end{lemma}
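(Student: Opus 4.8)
The statement is the SVD analogue of Lemma \ref{biashold}, so the plan is to mimic that proof, replacing the convolution kernel machinery with the spectral truncation. The key identity to exploit is that $\hat\theta_k^y$ is an \emph{unbiased} estimator of $\theta_k^y=\int f_y\phi_k\,d\nu$, hence $\E\hat{f}_y = \sum_{k=1}^N \theta_k^y\phi_k = f_y^{(N)}$, the orthogonal projection of $f_y$ onto $\mathrm{span}(\phi_1,\dots,\phi_N)$. Consequently, by Fubini and \eqref{rl},
\beqnn
(R_{\ell}^N-R_{\ell})(g-g') = \sum_{y\in\mathcal{Y}} p_y \int_\mathcal{X} \big(\ell(g(x),y)-\ell(g'(x),y)\big)\big(f_y^{(N)}(x)-f_y(x)\big)\nu(dx).
\eeqnn

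\textbf{Step 1: bound the projection bias.} Using Cauchy--Schwarz in $L_2(\nu)$ and then the approximation property of the ellipsoid $\Theta(\gamma,L)$ from \textbf{(R2)}, I would write $\|f_y^{(N)}-f_y\|_{L_2(\nu)}^2 = \sum_{k>N}(\theta_k^y)^2 \le N^{-2\gamma}\sum_{k>N}(\theta_k^y)^2 k^{2\gamma} \le L N^{-2\gamma}$, so that $\|f_y^{(N)}-f_y\|_{L_2(\nu)} \le \sqrt{L}\,N^{-\gamma}$. This is the exact counterpart of the $O(\lambda^\gamma)$ bias estimate in Lemma \ref{biashold}, with $\lambda^\gamma$ replaced by $N^{-\gamma}$.

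\textbf{Step 2: introduce the excess-risk norm and apply Bernstein.} Bounding the integral above by Cauchy--Schwarz over $(x,y)$ against the weighted measure $\nu_Y=\nu\otimes P_Y$ gives
\beqnn
(R_{\ell}^N-R_{\ell})(g-g') \le \Big(\max_y \sqrt{L}\,N^{-\gamma}\Big)\,\|\ell(g)-\ell(g')\|_{L_2(\nu_Y)} \le C N^{-\gamma}\big(R_{\ell}(g)-R_{\ell}(g')\big)^{1/(2\kappa)},
\eeqnn
where the last inequality is the Bernstein assumption (Definition \ref{KKb}) with $\mu=\nu_Y$. Then Young's inequality $xy^r \le ry + x^{1/(1-r)}$ with $r=1/(2\kappa)$ turns the right-hand side into $C N^{-2\kappa\gamma/(2\kappa-1)} + \tfrac{1}{2\kappa}(R_{\ell}(g)-R_{\ell}(g'))$, which is exactly the claimed $a(N)$ and $r$ (the $\sum_i N_i^{-\cdots}$ form arising if one uses a coordinatewise truncation level $N_i$). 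For the second assertion, when $|\ell(g)-\ell(g')|=|\ell(g)-\ell(g')|^2$ the $L_1(\nu_Y)$ norm appearing after the first Cauchy--Schwarz equals an $L_2$-type quantity that is already bounded by $(R_\ell(g)-R_\ell(g'))^{1/\kappa}$ (no extra square root is lost), so $2\kappa$ is replaced by $\kappa$ throughout, giving $a(N)=C\sum_i N_i^{-\kappa\gamma/(\kappa-1)}$ and $r=1/\kappa$.

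\textbf{Main obstacle.} The routine parts (Fubini, Cauchy--Schwarz, Young) are harmless; the one point requiring care is justifying the passage from $\|\ell(g)-\ell(g')\|_{L_1(\nu_Y)}$ (which is what naturally appears after pulling the bias bound out) to $\|\ell(g)-\ell(g')\|_{L_2(\nu_Y)}$, and only then invoking the Bernstein condition — this is where boundedness of $\ell$ and of the $f_y$ (from \textbf{(R2)}) must be used, exactly as the $\|f_y\|_\infty\le C_y$ remark at the end of Section \ref{sec2} is used to compare $L_2(P)$ and $L_2(\nu_Y)$ norms. A secondary subtlety is the bookkeeping for a vector truncation parameter $(N_1,\dots,N_d)$ versus the scalar $N$ actually used in Theorem \ref{svdtheo}; I would simply note that in the isotropic case $N_i=N$ the sum collapses and the single-term bound $a(N)=CN^{-2\kappa\gamma/(2\kappa-1)}$ is what feeds into condition \eqref{biascontrol}.
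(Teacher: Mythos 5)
Your proposal is correct and follows essentially the same route as the paper's proof: unbiasedness of $\hat{\theta}_k^y$ reduces the bias to the pairing of $\ell(g)-\ell(g')$ with the tail $\sum_{k>N}\theta_k^y\phi_k$, which Cauchy--Schwarz and the ellipsoid condition \textbf{(R2)} bound by $CN^{-\gamma}\Arrowvert \ell(g)-\ell(g')\Arrowvert_{L_2(\nu_Y)}$, after which the Bernstein condition and Young's inequality give $a(N)$ and $r$ exactly as in the paper. (The $L_1$-versus-$L_2$ worry you flag does not actually arise in your own argument, since your Cauchy--Schwarz lands directly on the $L_2(\nu_Y)$ norm, matching the paper's ``Cauchy--Schwarz twice'' step.)
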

\begin{proof}
We first write, with $E_{Z^y}\hat{\theta}_k^y=\theta_k^y\int_{\mathcal{X}}f_y(x)\phi_k(x)\nu(dx)$:
\beqnn
R_{\ell}^N(g)=\E R_n^N(g)&=&\E \int_\mathcal{X} \ell(g(x),y)\sum_{k=1}^{N}\hat{\theta}_k^y\phi_k(x)\nu(dx)\\
&=&\sum_{y\in\mathcal{Y}}p_y\int_\mathcal{X} \ell(g(x),y)\sum_{k=1}^{N}\E_{Z^y}\hat{\theta}_k^y\phi_k(x)\nu(dx)\\
&=&\sum_{y\in\mathcal{Y}}p_y\int_\mathcal{X} \ell(g(x),y)\sum_{k=1}^{N}\theta_k^y\phi_k(x)\nu(dx)
\eeqnn
Hence we have:
\beqnn
(R_{\ell}^\lambda-R_{\ell})(g-g')&=&\sum_{y\in\mathcal{Y}}p_y\int_{\mathcal{X}}(\ell(g(x),y)-\ell(g'(x),y))\left(\sum_{k=1}^{N}\theta_k^y\phi_k(x)-\sum_{k\geq 1}\theta_k^y\phi_k(x)\right)\nu(dx)\\
&=&\sum_{y\in\mathcal{Y}}p_y\int_{\mathcal{X}}(\ell(g'(x),y)-\ell(g(x),y))\sum_{k>N}\theta_k^y\phi_k(x)\nu(dx).
\eeqnn
Using Cauchy-Schwarz twice, we have since $(\phi_k)_{k\in\N}$ in an orthonormal basis and provided that $f_y\in\Theta(\gamma,L)$: 
\beqnn
|(R_{\ell}^\lambda-R_{\ell})(g-g')|&\leq &\sqrt{\sum_{y\in\mathcal{Y}}p_y\left(\int_{\mathcal{X}}(\ell(g'(x),y)-\ell(g(x),y))\phi_k(x)\nu(dx)\right)^2}\sqrt{\sum_{y\in\mathcal{Y}}p_y\left(\sum_{k>N}\theta_k^y\right)^2}\\
&\leq &\sqrt{\sum_{y\in\mathcal{Y}}p_y\int_{\mathcal{X}}(\ell(g(x),y)-\ell(g'(x),y))^2\nu(dx)\int_\mathcal{X}\phi_k^2(x)\nu(dx)}\sqrt{\sum_{y\in\mathcal{Y}}p_y\left(\sum_{k>N}\theta_k^y\right)^2}\\
&\leq &C\no \ell(g)-\ell(g')\no_{L_2(\nu_Y)}\sum_{y\in\mathcal{Y}}p_yN^{-\gamma}\sqrt{\sum_{k>N}(\theta_k^y)^2k^{2\gamma}}\\
&\leq &C\left(R_{\ell}(g)-R_{\ell}(g')\right)^{\frac{1}{2\kappa}}\sum_{y\in\mathcal{Y}}p_yN^{-\gamma}\sqrt{\sum_{k>N}(\theta_k^y)^2k^{2\gamma}}\\
&\leq &C\left(R_{\ell}(g)-R_{\ell}(g')\right)^{\frac{1}{2\kappa}}N^{-\gamma}.
\eeqnn
We conclude the proof using Young's inequality exactly as in Lemma \ref{biashold}.
\end{proof}
\begin{proof}[Proof of Theorem 3]
The proof is a straightforward application of Theorem 1. From Lemma \ref{lipsvd} and Lemma \ref{biassvd}, condition \eqref{biascontrol} in Theorem 1 can be written:
$$
N^{\frac{-2\kappa\gamma}{2\kappa-1}}\lesssim \left(\frac{N^{\beta}}{\sqrt{n}}\right)^{\frac{2\kappa}{2\kappa+\rho -1}}\Leftrightarrow N\lesssim n^{\frac{2\kappa-1}{2\gamma(2\kappa+\rho-1)+2(2\kappa-1)\beta}}.
$$
Applying Theorem 1 with a smoothing parameter $N$ such thatan  equality holds above gives the rates of convergence. 
\end{proof}
\subsection{Proof of Lemma \ref{dudleynoisy}}
The proof uses the maximal inequality presented in \cite{wvdv} to the class:
\beqnn
\mathcal{F}=\{\ell_\lambda(g)-\ell_\lambda(g'),g,g'\in\GG:P(\ell(g)-\ell(g'))^2\leq\delta^2\}.
\eeqnn
Indeed from Theorem 2.14.2 of \cite{wvdv}, we can write, $\forall \eta>0$:
\beqn
\label{maxineq}
\tilde{\omega}_n(\GG,\delta,\mu)&=&\E\sup_{g,g'\in\GG:\Arrowvert \ell(g)-\ell(g')\Arrowvert_{L_2(\mu)}^2\leq \delta^2}\left|(\tilde{P}_n-\tilde{P})(\ell_\lambda(g)-\ell_\lambda(g'))\right| \nonumber\\
&\leq &\frac{\no F\no^2_{L_2(\tilde{P})}}{\sqrt{n}}\int_0^\eta\sqrt{1+\mathcal{H}_{B}(\mathcal{F},\epsilon\no F\no^2_{L_2(\tilde{P})},L_2(\mu))}d\epsilon\nonumber\\
&+&\frac{\sup_{f\in\FF}\no f\no_{L_2(\tilde{P})}}{\sqrt{n}}\sqrt{1+\mathcal{H}_B(\FF,\eta\no F\no^2_{L_2(\tilde{P})},L_2(\mu))}
\eeqn
where $F(z,y)=\sup_{f\in\FF}|\ell_\lambda(g,z,y)-\ell_\lambda(g',z,y)|$ is the enveloppe function of the class $\FF$. Since $\{\ell_\lambda(g),g\in\GG\}$ is a LB-class with bounded constant $K(\lambda)$:
\beqnn
\no F\no^2_{L_2(\tilde{P})}&=&\int F^2(z)P(dz,dy)\\
&=&\sum_{y\in\mathcal{Y}}p_y\int\left(\sup_{f\in\FF}|\ell_\lambda(g,z,y)-\ell_\lambda(g',z,y)|\right)^2 Af_y(z)\nu(dz)\\
&\lesssim & K(\lambda)^2.\\
\eeqnn
Moreover, we have since $\{\ell_\lambda(g),g\in\GG\}$ is a LB-class with respect to $\mu$ with Lipschitz constant $c(\lambda)$:
\beqnn
\mathcal{H}_{B}(\{\ell(g),\,g\in\mathcal{G}\},\epsilon,L_2(\mu))\leq c\epsilon^{-2\rho}\Rightarrow \mathcal{H}_{B}(\FF,\epsilon,L_2(\tilde{P}))\lesssim c(\lambda)^{2\rho}\epsilon^{-2\rho}.
\eeqnn
Hence, we have in \eqref{maxineq}, choosing $\eta=\frac{c(\lambda)}{K(\lambda)^2}\delta$:
\beqnn
\tilde{\omega}_n(\GG,\delta)
&\lesssim &\frac{K(\lambda)^2}{\sqrt{n}}\int_0^\eta\sqrt{1+\epsilon^{-2\rho}K(\lambda)^{-4\rho}c(\lambda)^{2\rho}}d\epsilon+\frac{c(\lambda)\delta}{\sqrt{n}}\sqrt{1+\eta^{-2\rho}K(\lambda)^{-4\rho}c(\lambda)^{2\rho}}\\
&\lesssim &\frac{\eta K(\lambda)^2}{\sqrt{n}}+\frac{\eta^{1-\rho}K(\lambda)^{2(1-\rho)}c(\lambda)^\rho}{\sqrt{n}}+\frac{c(\lambda)\delta}{\sqrt{n}}+\frac{c(\lambda)^{1+\rho}\eta^{-\rho}K(\lambda)^{-2\rho}\delta}{\sqrt{n}}\\
&\lesssim& \frac{\eta^{1-\rho}K(\lambda)^{2(1-\rho)}c(\lambda)^\rho}{\sqrt{n}}+\frac{c(\lambda)^{1+\rho}\eta^{-\rho}K(\lambda)^{-2\rho}\delta}{\sqrt{n}} \frac{c(\lambda)}{\sqrt{n}}\delta^{1-\rho},
\eeqnn
provided that $\delta\leq 1$.

\bibliographystyle{plainnat}
\bibliography{referencenoisy}

\begin{thebibliography}{29}
\providecommand{\natexlab}[1]{#1}
\providecommand{\url}[1]{\texttt{#1}}
\expandafter\ifx\csname urlstyle\endcsname\relax
  \providecommand{\doi}[1]{doi: #1}\else
  \providecommand{\doi}{doi: \begingroup \urlstyle{rm}\Url}\fi

\bibitem[Audibert and Tsybakov(2007)]{AT}
J-Y. Audibert and A.B. Tsybakov.
\newblock Fast learning rates for plug-in classifiers.
\newblock \emph{The Annals of statistics}, 35:\penalty0 608--633, 2007.

\bibitem[Bartlett and Mendelson(2006)]{empimini}
P.L. Bartlett and S.~Mendelson.
\newblock Empirical minimization.
\newblock \emph{Probability Theory and Related Fields}, 135 (3):\penalty0
  311--334, 2006.

\bibitem[Blanchard et~al.(2008)Blanchard, Bousquet, and Massart]{svm}
G.~Blanchard, O.~Bousquet, and P.~Massart.
\newblock Statistical performance of support vector machines.
\newblock \emph{The Annals of Statistics}, 36 (2):\penalty0 489--531, 2008.

\bibitem[Bousquet(2002)]{bousquet}
O.~Bousquet.
\newblock A bennet concentration inequality and its application to suprema of
  empirical processes.
\newblock \emph{C.R. Acad. SCI. Paris Ser. I Math}, 334:\penalty0 495--500,
  2002.

\bibitem[Butucea(2007)]{butucea}
C.~Butucea.
\newblock goodness-of-fit testing and quadratic functionnal estimation from
  indirect observations.
\newblock \emph{The Annals of Statistics}, 35:\penalty0 1907--1930, 2007.

\bibitem[Cavalier(2008)]{cavaliersurvey}
L.~Cavalier.
\newblock Nonparametric statistical inverse problems.
\newblock \emph{Inverse Problems}, 24:\penalty0 1--19, 2008.

\bibitem[Devroye et~al.(1996)Devroye, {Gy\"orfi}, and Lugosi]{jaune}
L.~Devroye, L.~{Gy\"orfi}, and G.~Lugosi.
\newblock \emph{A Probabilistic Theory of Pattern Recognition.}
\newblock Springer-Verlag, 1996.

\bibitem[{\textsc{E}}ngl et~al.(1996){\textsc{E}}ngl, {\textsc{H}}ank, and
  {\textsc{N}}eubauer]{engle}
H.W. {\textsc{E}}ngl, M.~{\textsc{H}}ank, and A.~{\textsc{N}}eubauer.
\newblock \emph{Regularization of Inverse Problems}.
\newblock Kluwer Academic Publishers Group, Dordrecht, 1996.

\bibitem[Fan(1991)]{Fan}
J.~Fan.
\newblock On the optimal rates of convergence for nonparametric deconvolution
  problems.
\newblock \emph{The Annals of Statistics}, 19:\penalty0 1257--1272, 1991.

\bibitem[Klemela and Mammen(2010)]{klemela}
J~Klemela and E.~Mammen.
\newblock Empirical risk minimization in inverse problems.
\newblock \emph{The Annals of Statistics}, 38 (1):\penalty0 482--511, 2010.

\bibitem[Koltchinskii(2006)]{kolt}
V.~Koltchinskii.
\newblock Local rademacher complexities and oracle inequalties in risk
  minimization.
\newblock \emph{The Annals of Statistics}, 34 (6):\penalty0 2593--2656, 2006.

\bibitem[Lecu\'e and Mendelson(2012)]{nonexactlecue}
G.~Lecu\'e and S.~Mendelson.
\newblock General non-exact oracle inequalities for classes with a
  subexponential envelope.
\newblock Futur paper, 2012.

\bibitem[Lederer and van~de Geer(2012)]{lederer}
Y.~Lederer and S.~van~de Geer.
\newblock New concentration inequalities for suprema of empirical processes.
\newblock Submitted, 2012.

\bibitem[Loustau(2009)]{loustau2}
S.~Loustau.
\newblock Penalized erm over besov spaces.
\newblock \emph{Electronic journal of Statistics}, 3:\penalty0 824--850, 2009.

\bibitem[Loustau and Marteau(2011)]{pinkfloyds}
S.~Loustau and C.~Marteau.
\newblock Discriminant analysis with errors in variables.
\newblock \emph{http://hal.archives-ouvertes.fr/hal-00660383}, 2011.

\bibitem[Mammen and Tsybakov(1999)]{mammen}
E.~Mammen and A.B. Tsybakov.
\newblock Smooth discrimination analysis.
\newblock \emph{The Annals of Statistics}, 27 (6):\penalty0 1808--1829, 1999.

\bibitem[Massart(2000)]{toulouse}
P.~Massart.
\newblock Some applications of concentration inequalities to statistics.
\newblock \emph{Ann. Fac. Sci. Toulouse Math.}, 9 (2):\penalty0 245--303, 2000.

\bibitem[Massart and N\'ed\'elec(2006)]{nedelec}
P.~Massart and E.~N\'ed\'elec.
\newblock Risk bounds for statistical learning.
\newblock \emph{The Annals of Statistics}, 34 (5):\penalty0 2326--2366, 2006.

\bibitem[Meister(2009)]{meister}
A.~Meister.
\newblock \emph{Deconvolution problems in nonparametric statistics}.
\newblock Springer-Verlag, 2009.

\bibitem[Mendelson(2003)]{mendelsonkernel}
S.~Mendelson.
\newblock On the performance of kernel classes.
\newblock \emph{Journal of Machine Learning Research}, 4:\penalty0 759--771,
  2003.

\bibitem[Talagrand(1996)]{talagrand}
M.~Talagrand.
\newblock New concentration inequalities in product spaces.
\newblock \emph{Invent. Math}, 126:\penalty0 505--563, 1996.

\bibitem[Tsybakov(2004{\natexlab{a}})]{booktsybakov}
A.B. Tsybakov.
\newblock \emph{Introduction \`a l'estimation non-param\'etrique}.
\newblock Springer-Verlag, 2004{\natexlab{a}}.

\bibitem[Tsybakov(2004{\natexlab{b}})]{tsybakov2004}
A.B. Tsybakov.
\newblock Optimal aggregation of classifiers in statistical learning.
\newblock \emph{The Annals of Statistics}, 32 (1):\penalty0 135--166,
  2004{\natexlab{b}}.

\bibitem[Tsybakov and van~de Geer(2005)]{tsybakov2005}
A.B. Tsybakov and S.A. van~de Geer.
\newblock Square root penalty: adaptation to the margin in classification and
  in edge estimation.
\newblock \emph{The Annals of Statistics}, 33 (3):\penalty0 1203--1224, 2005.

\bibitem[van~de Geer(2000)]{vdg}
S.~van~de Geer.
\newblock \emph{Empirical Processes in M-estimation}.
\newblock Cambridge University Press, 2000.

\bibitem[van~der Vaart and Wellner(1996)]{wvdv}
A.~W. van~der Vaart and J.~A. Wellner.
\newblock \emph{Weak convergence and Empirical Processes. With Applications to
  Statistics}.
\newblock Springer Verlag, 1996.

\bibitem[Vapnik(1982)]{vapnik82}
V.~Vapnik.
\newblock \emph{Estimation of Dependances Based on Empirical Data}.
\newblock Springer Verlag, 1982.

\bibitem[Vapnik(2000)]{vapnik2000}
V.~Vapnik.
\newblock \emph{The Nature of Statistical Learning Theory}.
\newblock Statistics for Engineering and Information Science, Springer, 2000.

\bibitem[Williamson et~al.(2001)Williamson, Smola, and {Sch\"olkopf}]{will}
R.C. Williamson, A.J. Smola, and B.~{Sch\"olkopf}.
\newblock Generalization performance of regularization networks and support
  vector machines via entropy numbers of compact operators.
\newblock \emph{IEEE Transactions on Information Theory}, 47 (6):\penalty0
  2516--2532, 2001.

\end{thebibliography}

\end{document}